\providecommand{\U}[1]{\protect\rule{.1in}{.1in}}
\def\@seccntformat#1{\csname the#1\endcsname.\quad}
\newtheorem{theorem}{Theorem}[section]
\newtheorem{corollary}{Corollary}[theorem]
\newtheorem{definition}{\noindent Definition}
\newtheorem{example}{Example}
\newtheorem{lemma}[theorem]{Lemma}
\newtheorem{proposition}[theorem]{Proposition}
\newtheorem{remark}{Remark}
\renewenvironment{proof}[1][Proof]{\noindent\textbf{#1.} }{\ \rule{0.5em}{0.5em}}
\begin{document}
\title{{\Large \textbf{ Novel Excitation of local fractional dynamics}}}
\author[1]{ Dhurjati Prasad Datta\thanks{Corresponding author; email:dp${_-}$datta@yahoo.com}} 
\author[2] {Soma Sarkar}
\author[3] {Santanu Raut} 
\affil[1] {IUCAA  Centre for Astronomy Research and Development, University of North Bengal, Siliguri, Pin: 734013, India}
\affil[2] {Sunity Academy, Coochbehar, Pin:736170, India}
\affil[3] {Department of Mathematics, Mathabhanga College, Coochbehar, Pin:736146, India}
\date{}
\maketitle
\begin{abstract}  
{\small 
The question of a possible excitation and emergence of fractional type dynamics, directly from a conventional integral order dynamics, as a continuous transition or deformation, is of significant current interest. 
Although there have been  lot of activities in nonlinear, fractional or not, dynamical systems, the above question appears yet to be addressed systematically in the current literature of complex systems studies. The present work may be considered to be a step forward in this direction.  Based on a novel concept of asymptotic duality structure, we present here an extended analytical framework that would provide a scenario for realizing the above stated continuous deformation of integral order dynamics to a local fractional order dynamics on a fractal and fractional space realized in an asymptotic temporal or spatial scale. The related concepts of self dual and strictly dual asymptotics are introduced and there relevance in connection with smooth and nonsmooth deformation of the original real line are pointed out. The  relationship of the duality structure and renormalization group is examined. The ordinary derivation operator is shown to be invariant under this duality enabled renormalization group transformation, leading thereby to a {\em natural} realization of local fractional type derivative in a fractal space. As an application we discuss linear wave equation in one and two dimensions and show how the underlying integral order wave equation could be deformed and renormalized suitably to yield meaningful results for vibration of a fractal string or wave propagation in a region with fractal boundary.
 } 
\end{abstract}

\begin{center}

{\bf Key Words}: {\em Local fractional dynamics, Duality structure, Renormalization group, Wave equation} \\

{\bf MSC Numbers}:  26A30, 26A33, 26E30, 28A80, 35L05

\end{center}
\baselineskip=14.8pt

\section{Introduction}

There have been  considerable interests recently in modeling complex real world phenomena by  local fractional dynamics(LFD). The basic motivation of LFD arose from the necessity to remedy the problems of nonlocality and lack of translation invariance of the classical definitions of fractional  derivatives involving nonlocal integro-differential operators with generally power law kernels, as in the definition of Riemann-Liouville fractional derivative \cite{kol1}. Over the past few decades it became increasingly more and more evident that conventional dynamical formalism based on ${\bf R}^n$ calculus, and the associated  theory of (nonlinear) integral order differential equations  is fundamentally deficient \cite{kol1,kol2, west,tara1, tsallis} in analyzing and deriving right scaling properties of emergent (i.e. evolutionary) complex structures in various natural and other real world problems, such as the origin and proliferation of biological  structures  \cite{west}, financial modeling \cite{mandel1}, turbulence \cite{fris}, large scale structure formation in cosmology \cite{cos}, meteorological predictions \cite{meteor}, to name a few. To counter this challenging problem of more realistic modeling real world phenomena, applications of fractional calculus techniques in the form of fractional dynamics \cite{west, tara1} have gained significance over the later half of the past century. However, the nonlocal integro-differential operators of fractional theories have, nevertheless, paused several conceptual and theoretical problems. One of the major problems of such operators is that the conventional definitions of fractional derivatives do not enjoy simple physical or geometrical meaning in the sense of rate measurer or existence of tangents as in the case of integer order derivative. On the other hand, although nonlocality and power law kernels may be exploited to model memory effects in various complex collective systems, this makes conventional fractional derivatives unsuitable to study local singularity structure of fractal/multifractal signals \cite{kol1}. 

To remove these limitations the theory of LFD was originally formulated by kolwankar and Gangal \cite{kol1,kol2} in the context of Riemann-Liouville derivative, that was later reinterpreted and simplified by various authors subsequently \cite{adda, wen, prod1,prod2}. In the later formulations, authors formulated LFD as the limit of a differential quotient when the increment in independent variable  is assumed to be a power law $h^{\alpha}, \ 0<\alpha<1$, and established its equivalence with the original formulation and also discussed its utility in the study of Holder continuous functions\cite{adda, wen, prod1,prod2} . The approach receives wide applications in various scaling phenomena having predominant fractal support \cite{bel1, bel2, bel3}.
It is also recognized recently that, although formulated originally on the real line $\bf R$ \cite{kol1}, the study of LFD should be restricted to events/processes having fractal support \cite{kol2, cress, prod2}, since otherwise the local fractional derivative of a continuous function defined on $\bf R$  vanishes almost everywhere (see criticism by Tarasov \cite{tarasov1,tarasov2, cress}). In conformity with above realizations, the theory of LFD is subsequently extended successfully over fractal sets (e.g. Cantor sets or continuous fractal curves) \cite{parv1,parv2, bel1, bel2, bel3}, with the underlying fractal set being parametrized either by the associated integral staircase type monotonic function $S^s_F(x)$ \cite{parv1,parv2, bel1} or simply by a power law $x^s$ \cite{bel2,bel3}, where $x\in F$ and $s$ being the corresponding Hausdorff dimension of the fractal set $F$. An analysis on a fractal set $F\subset \bf R^2$ (having topological dimension either 0 or 1) based on LFD has the conceptual advantage that the associated derivation operator $D_{ F}$ is linear and satisfies most of  the algebraic properties of ordinary derivation on $\bf R$, and so allows straightforward physical/geometrical interpretations. A conjugacy between the integral order calculus and the LFD based fractal calculus is also established \cite{parv2}. 

During the ensuing period, another approach to analysis on fractal sets is being pursued independently by the present authors, that parallels closely the spirit of LFD, but with a larger framework aiming to realize the fractional dynamics on a fractal set as a continuous deformation of the ordinary integral order calculus (or dynamics) in an asymptotic spatio-temporal limit (either as $x\rightarrow 0$, or $t\rightarrow \infty$, or both simultaneously) \cite{dpr1,dpr2, dpr3,  dp, dpp, dss}. In the framework of conventional LFD, definitions of fractional derivatives are introduced in an ad hoc manner with specific applications in mind, and the frameworks of integral order and fractional calculus and dynamics remain as unconnected theories without having any continuity in the sense of an asymptotic deformation as conceived and developed in the present  paper or in any other analogous sense. The problem of asymptotic emergence of chaos in a driven nonlinear oscillation through period doubling route, for instance, leading to the break down of the original integral order governing equation on chaotic attractor, may be considered as one of the motivations of the above philosophy. A LFD type driving equation on the chaotic attractor is expected to offer new insights into the true nature of chaotic dynamics. The present paper may be treated as the continuation and further elaboration on the concepts and results reported in \cite{dss}. 

By asymptotic deformation, we mean a natural deformation in the family of Cauchy null real sequences ${\cal S}_0$ via a sort of renormalization group (RG) transformation $\cal T$ so that a special class of such analytically null sequences $\{a_n\}\in {{\cal S}_0}$, acquire a finite (non-null), real ultrametric value $\{a_n\}\mapsto {\cal T}(\{a_n\})$ so that $|{\cal T}(\{a_n\})|:=v(\{a_n\}) \ (>0)$,  $v$ denotes a {\em discrete}  ultrametric absolute value \cite{katok}: $v:{\cal S}_0\mapsto \bf R^+$, called {\em asymptotic renormalized valuation}. The renormalized null set ${\tilde {\cal S}_0}$, as a consequence, is realized as a totally disconnected ultrametric space, equipped with  this non-null, real valued norm $v$, having a countable set of distinct, {\em non-dense or densely defined}, values that are  induced by the renormalized transformation $\cal T$. Because  of the continuity of  the ultrametric norm,  the renormalized null set ${\tilde {\cal S}_0}$  is  then  mapped, in general, into a  connected, fractal  set (curve) ${\cal O} =v({\tilde {\cal S}_0})$, thereby inducing a {\em continuous deformation} of a linear neighbourhood of $0\in \bf R$ into a deformed neighbourhood ${\cal O}\subset \cal R$, of the associated deformed real line $\cal R$.  We show that the RG induced fractal curve $\cal O$ in the deformed neighbourhood of 0 would generally have  the structure of a piece-wise smooth or increasing or strictly increasing devil's staircase type singular function that is parametrized by a suitably rescaled real {\em scaling}       parameter $\xi=\log x/\delta$ (where $0<\delta (x)<x, \ \delta(x) \rightarrow 0$ as $x\rightarrow 0$ ). {\em Depending on the density of the ultrmatric value set}, the map is either (i) a piece-wise smooth or (ii)    a nondereasing , Lebesgue-Cantor's singular function,  or may even be  a (iii)  strictly increasing, Lebesgue-Cantor's type integral function \cite{parv2} corresponding to a no-where differentiable fractal curve. The judicious choice of {\em right} (i.e. most appropriate) valuation $v$ and the associated fractal singular function should generally be dictated by the actual physical or dynamical context asking for a formulation of fractal/fractional type dynamics.

Formation of such an extended nonlinear structure $\cal O$ is facilitated by the existence of a duality relationship in the renormalized null sequences and their associated ultrametric values. In the next section Sec.2, we formulate this concept of asymptotic duality and show how this concept leads, at the first level, to a non-archimedean extension \cite{katok} of the ordinary real line $\bf R$ into a structured field $\bf ^*R$ by extending each point $r\in \bf R$, interpreted as a limiting closed interval $[r-\delta,r+\delta], \ \delta\rightarrow 0$,  into an asymptotic renormalized set $^*r:={\bf ^*O}(r)$, respecting the translation symmetry viz., ${\bf ^*O}(r)(r)=r+{\bf ^*O}$, where ${\bf ^*O}:={\bf ^*O}(0)$. The asymptotic valuation $v$ acting in the asymptotic neighbourhood of each point $^*r\in {\bf ^*R}$ then induces a sort of deformation in the original real line $\bf R$ into a deformed real line $\cal  R$ admitting nontrivial geometric structures depending on the precise nature of the asymptotic valuation. It transpires that the renormalized null sequences in  ${\bf ^*O}$ may be realized either as {\em self dual, weakly self dual} or {\em strictly dual} asymptotics and, hence,  the associated deformed set, ${\cal O}=v({\bf ^*O})$, would accordingly have the structure of {\em a line segment, or a collection of finitely many, but continuous,  broken line segments, or a fractal curve}, respectively in that order. 

We next show that the ordinary derivation in $\bf R$ is invariant under the RG transformation $\cal T$, and consequently the ordinary derivation extends naturally to a LFD type derivation over the deformed (nonlinear) set $\cal R$ relative to the renormalized scaling variable $v(\xi)$, when the  continuously differentiable function space over $\bf R$ is extended {\em self-similarly} over the extended space $\cal R$. The presented analytic structures and associated RG transformation now offers a novel mechanism of continuously deforming the original integral order calculus and dynamics into the fractional dynamics. In other words, {\em the formalism of LFD may be said to have been  derived    from the integral order dynamics, from first principle, in the present duality enhanced framework of ordinary analysis}. 

As an application of this extended nonlinear formalism, we study linear wave equation in one and two dimensions. The invariance of the wave equation under asymptotic RG transformation is exploited to transform the equation into an LFD type wave equation in an asymptotic neighbourhood of $(t,x,y)$, when one invokes and justifies a natural spatio-temporal coherence in the respective spatial and temporal duality relationships \cite{dss}. The  deformed wave equation in 1 dimension is used to model standing fractal wave vibration of a fractal string.
 The linear dispersion relation $\omega=ck$ of the 1d wave equation, for instance,  is transformed into a fractal dispersion of the form $\omega_f=c_fk_f$, where the fractal wave number $k_f=k_f(k)$ inherits the shape of the underlying fractal string \cite{fracfreq}.

Next, the deformed wave equation in 2 dimension is considered for modeling  vibration of a 2d membrane with fractal boundary. Specifically, we study the wave equation with fractal  boundary when the boundary curve is a quadratic Koch type II fractal curve following Ref.\cite{su}. In the cited paper, the authors used a separation of variable method  on a sequence of boundary value problems with polygonal boundaries, approximating the original problem, successively, that should to converge to actual fractal boundary problem, thereby resulting in the desired solution of original problem. However, the solution reported in the paper \cite{su} was in the form of a lacunary series and represented a continuous,  no-where differentiable function, reflective of the fractal nature of the boundary. However, the reported solution is deficient in many ways, predominant being the fact that the considered approximate polygonal boundaries apparently fail to yield the intended fractal boundary in the limit, and the lacunary series can not be justifiably claimed to  be the correct solution of the problem. Here, we show that the duality structure provides one with an interesting avenue, not only in removing the above deficiency, but also to offer a more realistic modeling of the two dimensional wave vibration by a LFD type fractal wave equation in appropriate fractal scaling variables.  Implementing judiciously and coherently \cite{dss} the renormalized deformation technique across space and time coordinates,  the fractal  wave equation is shown to yield, by simple separation of variable method, a continuous solution that is also twice continuously differentiable in the extended sense of fractal differentiability \cite{dss} analogous to LFD \cite{parv2}. The solution is, however, indeed a fractal function through the argument scaling variables which are essentially integral mass functions corresponding to the fractal boundary.

The paper is organized as follows. In Sec.2 we present the concept of asymptotic duality structure and associated analytic formalism in detail \cite{dss}, introducing and proving several new concepts. A particularly important fact is the distinction between self dual and strictly dual asymptotics and there role in determining the smooth or nonsmooth structures in the vicinity of the real line via asymptotic duality transformations (sec.2.2). The relationship between duality structure and RG is  discussed in Sec.2.3. In Sec.3.1, the renormalibility of the ordinary derivative is proved, which is later extended naturally to introduce a LFD type derivation for fractal space. The problem of wave equation in one and two dimensions are studied in subsequent section 3.2 and 3.3 respectively. In Sec.4, we summarize the paper with some concluding remarks.

\section{Asymptotic duality}

Here, we review the asymptotic duality structure in the real number system \cite{dss} and also present some new results. This will be utilized later to construct  well defined $C^2$ solutions in appropriate renormalized (deformed) asymptotic variables for a wave equation with fractal boundaries.

Let us begin by recalling that every point $r$ in the real line $\bf R$ essentially represents the equivalence class of the  Cauchy sequences , not only  of the rational, but more generally, of real numbers, converging to the chosen number $r$. For simplicity, let us first fix $r$ to zero, i.e. $r=0$. We then have the Cauchy null sequences of real numbers, denoted ${\cal S}_0$.

Let us choose and fix a null sequence $a=\{a_n: a_n\in \ {\bf R}\}, \ a_n>0$,   said to be a {\em scale}. Relative to this chosen scale, the null set ${\cal S}_0$ gets  renormalized  in the form ${\tilde{\cal S}}^a_{0}:={\cal S}^a_{0}\cup {\cal S}^0_0$
where, ${\cal S}^a_{0}=\{A^{\pm}|\ |A^{\pm}|=\{a_{n}\times a_{n}^{\mp b^{\pm}_{n}}\}\}$ (the symbol $\times$ denotes ordinary product operation in $\bf R$), when $b^{\pm}_{n}> 0$ are  two non-null Cauchy sequences such that the renormalized sequences $A^{\pm} \in {\cal S}_0$ must also belong to null set, and ${\cal S}^0_0=\{A^{\pm}_0| \ |A^{\pm}_0|=\{a_{n}\times a_{n}^{\mp b^{\pm}_{n}}\}, \ A^{\pm}_0 \in {\cal S}_0\}$, when  $b^{\pm}_n$ is null or divergent (either to $\infty$ or oscillating).  As a consequence, the exponentiated sequences
$\{b_n^\pm\}$ belong to the set of all (Cauchy or not) sequences of real numbers. It follows, in fact, that
 
\begin{proposition} {\rm \cite{dss}}
 ${\tilde {\cal S}}^a_0={\cal S}_0$. 
\end{proposition}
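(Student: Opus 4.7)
The plan is to pass to a logarithmic scale and reparametrise each null sequence against the fixed scale $a=\{a_n\}$. Given $A=\{A_n\}\in{\cal S}_0$, I may replace $A_n$ by $|A_n|$ (the definitions involve only the absolute value sequences $|A^{\pm}|$) and discard the finitely many indices where $A_n=0$. Since $a_n\to 0^+$ and $|A_n|\to 0^+$, both $\ln a_n$ and $\ln|A_n|$ tend to $-\infty$, so the log-ratio
\[
c_n \;:=\; \frac{\ln|A_n|}{\ln a_n}
\]
is well defined and eventually positive. The identity $|A_n|=a_n^{c_n}=a_n\cdot a_n^{c_n-1}$ then shows that $b_n^+:=1-c_n$ produces the $A^+$-shape $|A_n|=a_n\cdot a_n^{-b_n^+}$, while $b_n^-:=c_n-1$ produces the $A^-$-shape $|A_n|=a_n\cdot a_n^{b_n^-}$. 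Positivity of $b_n^+$ is equivalent to $|A_n|>a_n$, and of $b_n^-$ to $|A_n|<a_n$, so which branch one picks is dictated by which side of the scale $|A_n|$ sits on.

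First I would dispose of the inclusion ${\tilde {\cal S}}^a_0\subseteq{\cal S}_0$: it is immediate from the definition of ${\cal S}^a_0$ (which explicitly demands $A^{\pm}\in{\cal S}_0$), together with the tacit normality requirement implicit in ${\cal S}^0_0$ that its elements form null sequences. For the reverse inclusion I would run a case analysis on the asymptotic behaviour of $\{c_n\}$: (i) if $c_n\to c\in(0,1)$, then $b_n^+=1-c_n$ is a non-null Cauchy sequence, positive for large $n$, and $A\in{\cal S}^a_0$ via the $A^+$-form; (ii) if $c_n\to c>1$, the dual choice $b_n^-=c_n-1$ places $A$ in ${\cal S}^a_0$ via the $A^-$-form; (iii) if $c_n\to 1$, then $b_n^{\pm}$ is null and $A\in{\cal S}^0_0$; (iv) if $c_n\to\infty$ or oscillates unboundedly, the corresponding $b_n^{\pm}$ diverges or oscillates and $A\in{\cal S}^0_0$ once more; the borderline subcase $c_n\to 0$ still gives $b_n^+\to 1$, a non-null Cauchy, hence $A\in{\cal S}^a_0$. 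In each ${\cal S}^a_0$-scenario I would check nullity of $A^{\pm}$ by pushing the limit $c\in(0,\infty)\setminus\{1\}$ through $|A_n|=a_n^{c_n}\to 0$.

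The main obstacle I anticipate is the hybrid regime in which $|A_n|/a_n$ crosses $1$ infinitely often: the difference $c_n-1$ then changes sign along $n$ without settling, and no single $\pm$-choice keeps $b_n^{\pm}$ positive on all of $\mathbb{N}$. My plan is to split $\mathbb{N}=N^+\cup N^-$ according to whether $|A_n|>a_n$ or $|A_n|<a_n$, represent $A|_{N^{\pm}}$ in the appropriate $\pm$-form, and then patch the two partial sequences together. If each branch of $\{c_n\}$ is Cauchy with a positive limit distinct from $1$, the union reconstructs $A$ from two elements of ${\cal S}^a_0$; otherwise the patched $b_n^{\pm}$ is oscillating or divergent and $A$ lands in ${\cal S}^0_0$. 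Verifying that this branch-bookkeeping is compatible with the duality pairing between the $+$ and $-$ labels, which the author will exploit in the subsequent subsections, is the only real delicacy; the computational core of the proposition is otherwise just the identity $|A_n|=a_n^{c_n}$ together with positivity bookkeeping for the exponents.
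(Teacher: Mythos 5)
Your proposal is correct and follows essentially the same route as the paper: the forward inclusion is taken as definitional, and the reverse inclusion comes from writing $|A_n|=a_n\cdot a_n^{c_n-1}$ — your $|c_n-1|$ is exactly the paper's valuation $v_n(\bar a)=\big|\log_{a_n^{-1}}|\bar a_n/a_n|\big|$ — and sorting each null sequence by the limiting behaviour of this exponent into ${\cal S}^a_0$ (finite nonzero limit) or ${\cal S}^0_0$ (null, divergent or oscillating). The ``hybrid'' patching you anticipate is unnecessary and, as written, slightly misplaces those sequences: if $c_n-1$ changes sign infinitely often it either tends to $0$ or fails to converge, and in both cases the sequence is irrelevant and belongs to ${\cal S}^0_0$ by the paper's convention, so no reconstruction from two elements of ${\cal S}^a_0$ is needed (nor would such a reconstruction by itself establish membership in ${\tilde{\cal S}}^a_0$).
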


\begin{proof}
Clearly, ${\tilde {\cal S}}^a_0\subset{\cal S}_0$, by definition. Conversely, given $\bar a=\{\bar a_n\} \in {\cal S}_0$,
there exists $\bar A\in {\cal S}^a_{0}$ such that either (i) $|\bar A|=\{a_n\times a_n^{-v_n(\bar a)}\}, \ v_n(\bar a)<1$ for $n>N$, or 
(ii) $|\bar A|=\{a_n\times a_n^{+v_n(\bar a)}\}, \ v_n(\bar a)\geq 1$, where $v_n(\bar a)= { |\log_{a_n^{-1}}|{\bar a}_n/a_n| \ |}>0$ ,
when $\bar a_n\neq 0$ for a sufficiently large n, and $\lim v_n(\bar a) $ as
$n\rightarrow \infty$ exists finitely. Otherwise, that is, when $\lim  v_n(\bar a) $ as
$n\rightarrow \infty$ does not exists (either divergent to $\infty$ or oscillating),  we set the exponent in (ii) equal to $\infty$.
As a consequence, we have the reverse inequality, $\tilde{\cal S}^a_{0}\supset {\cal S}_0$ for any $a\in {\cal S}_0, \ a\neq 0$.
\end{proof}

\begin{remark}{\rm 
Once a null sequence $a$ is identified as {\em scale}, the set of null sequences ${\cal S}_0$ is rearranged (renormalized) with redressed null sequences which are either {\em well behaved}  and {\em relevant} null sequences ${\cal S}^a_{0}$ and those which are {\em irrelevant}  null sequences relative to the chosen scale $a$. The irrelevant null sequences  define the renormalized null set ${\cal S}^0_{0}$, when the well behaved sequences in ${\cal S}^a_{0}$ now provide new asymptotic structure that we are concerned here. The non-null Cauchy sequences $\{b_n^\pm\}$ is subsequently interpreted as renormalized effective values awarded to null sequences and shown to have the property of an ultrmetric absolute value \cite{dss}. As a convention, we assume the product of two or more null sequences to belong to the renormalized null set ${\cal S}^0_{0}$, i.e., $a_1a_2\in {\cal S}^0_{0}$,  for $ a_1,a_2\in {\cal S}_0 $, etc. However, a multiplicative structure in the relevant null sequences ${\cal S}^a_{0}$ is induced via the discrete ultrametric valuation, via proposition (\ref{ultram}).
}
\end{remark}

\begin{example}
Fix $a:=n^{-1}$ as the privileged scale. Let ${b}^\pm=n^{-(1+kn^{\mp\alpha}+o(n^{-1}))}, \ k>0, \ 0<\alpha<1$. Then ${b}^\pm$ are  good sequences since $v_n({b}^\pm)= kn^{\mp\alpha}+o(n^{-1})$. The sequence of the form $c=n^{-(1+\sin\pi n+o(n^{-1})}$ is an example of irrelevant sequence since $v_n(c)=\sin\pi n +o(n^{-1})$ is oscillating. Sequences of the form $d=n^{-(1+kn^{-\alpha}+o(n^{-1}))}, \ k>0, \ \alpha\geq 1$ also constitute a class of redressed null sequences since $v_n(d)=o(n^{-\alpha}), \ \alpha\geq 1$. By convention, sequences of the form $n^{-2}, \ n^{-3}$ etc are irrelevant. By the same token, sequences ${b^\pm}^2$ are also irrelevant.
\end{example}

As a consequence, any null sequence ${\bar a}\in {\cal S}_0$ gets a {\em renormalized effective value}, called the {\em asymptotic renormalized valuation}, and denoted by the sequence $v_n(\bar a)={ |\log_{a_n^{-1}}|{\bar a}_n/a_n| \ |}, \ n\geq N$. Relative to the chosen scale $a$ and the associated renormalized valuations $v_n(\bar a), \ \forall {\bar a}\in {\cal S}_0$, the set of {\em relevant null sequences} $\tilde{\cal S}^a_0$ now has the following refined partition.

\begin{proposition} {\bf Refined Partition of Null Sequences} {\rm \cite{dss}}\label{rfn}
 The choice of a privileged scale (sequence) $a=\{a_{n}\}\in { {\cal  S}}_{0}$
introduces a polarizing effect and partitions ${ \tilde{\cal  S}}^a_{0}$ into equivalence
classes having the renormalized representations given by

${\tilde{\cal S}}^a_{0}=\{ \{A_n^0\},\{A_{n}^{\infty
}\},\{A_{n}^{-}\}, \{A_{n}^{+}\}, \{A_{n}^{^{\prime }}\}\}$ where

$(1.a)  \ \ |A_{n}^{0 }|= a_{n}\times a_{n}^{{\alpha_n }(1+o(1))}(1+o(1))$

$(1.b) \ \ |A_{n}^{\infty }|= a_{n}\times a_{n}^{{\beta_n }(1+o(1))}(1+o(1))$

$(1.c) \ \ |A_{n}^{+}|= a_{n}\times a_{n}^{-k^n_{+}(1+o(1))}(1+o(1))$

$(1.d) \ \ |A _{n}^{-}|= a_{n}\times a_{n}^{k^n_{-}(1+o(1))}(1+o(1))$

$(1.e) \ \ |A_{n}^{^{\prime }}|= a_n\times a_{n}^{\pm \gamma^{\pm}_n(1+o(1))}(1+o(1))$

\noindent where $\ n\geq N, $  for sufficiently large $N $. Moreover, $|\alpha_n|\rightarrow | 0$, $|\beta_n|\rightarrow \infty$ or does not exist, $|k^n_-|\rightarrow |k_-|>1$,  $|k^n_+|\rightarrow |k_+|: \ 0< k_+<1$, and $\gamma^{\pm}_n\rightarrow \gamma:, \ 0\leq \gamma\leq 1$, for $n\rightarrow \infty$.  Further, we set $S_0^0=\{\{A_n^0\}, \{A_n^{\infty}\}\}$, that also include the product of null sequences, by convention.
\end{proposition}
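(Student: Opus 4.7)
The strategy is to refine the exhaustive representation $\tilde{\cal S}^a_0 = {\cal S}_0$ established in Proposition 1 by classifying null sequences according to the asymptotic behavior of their renormalized valuation. Given $\bar{a} \in {\cal S}_0$ with $\bar{a}_n \neq 0$ eventually, Proposition 1 allows one to write $|\bar{a}_n| = a_n \cdot a_n^{\epsilon_n(1+o(1))}(1+o(1))$ where $\epsilon_n \in {\bf R}$ encodes both the magnitude $v_n(\bar{a}) = |\epsilon_n|$ and the sign distinguishing slower decay ($\epsilon_n < 0$, i.e.\ $|\bar{a}_n| > a_n$) from faster decay ($\epsilon_n > 0$). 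The five claimed classes then arise by a case analysis on the long-run behavior of $\{\epsilon_n\}$.

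First, if $|\epsilon_n| \to 0$, setting $\alpha_n = \epsilon_n$ yields (1.a); if $|\epsilon_n|$ diverges to $\infty$ or fails to possess any limit, setting $\beta_n = \epsilon_n$ yields (1.b). Next, if $\epsilon_n$ converges to a negative value of magnitude $k_+ \in (0,1)$, then $|\bar{a}_n| \sim a_n^{\,1-k_+}$ is still null (since $1-k_+ > 0$) but decays strictly slower than $a_n$; putting $k^n_+ = -\epsilon_n$ delivers (1.c), and the strict upper bound $k_+ < 1$ is forced by nullity, since the threshold $k_+ = 1$ would yield a constant ratio and destroy convergence to $0$. Symmetrically, if $\epsilon_n$ converges to a positive constant $k_- > 1$, then $|\bar{a}_n| \sim a_n^{\,1+k_-}$ decays strictly faster than $a_n$, giving (1.d); the lower bound $k_- > 1$ reflects the convention that squares and higher integer powers of null sequences are absorbed into ${\cal S}^0_0$. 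The residual marginal regime --- where $|\epsilon_n|$ accumulates at a value $\gamma \in [0,1]$ with the sign of $\epsilon_n$ unstable or sitting at the critical thresholds $\gamma \in \{0,1\}$ --- is absorbed into class (1.e) by explicitly recording the $\pm$ sign structure.

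The bulk of the technical work consists in verifying that the two independent $(1+o(1))$ prefactors and exponent corrections can simultaneously absorb all subdominant terms in $\log|\bar{a}_n/a_n|$ when rescaled by $\log a_n^{-1}$; this is routine asymptotic bookkeeping once $\epsilon_n$ is isolated via Proposition 1. The chief conceptual obstacle is the calibration of class (1.e): the $\pm$ sign convention and the closed boundary range $\gamma \in [0,1]$ must be configured so that the five classes together exhaust $\tilde{\cal S}^a_0$, accommodating every sequence whose valuation sits at a critical threshold separating (1.a), (1.c), (1.d) or whose sign alternates asymptotically, while not overlapping with the three generic scaling classes. This calibration, together with the concluding convention that collects $\{A^0_n\}$ and $\{A^\infty_n\}$ into the irrelevant subfamily ${\cal S}^0_0$, is what elevates the decomposition from a mere case enumeration to a canonical partition suitable for inducing an ultrametric valuation in the sequel.
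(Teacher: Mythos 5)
The paper itself offers no proof of this proposition (it is imported from Ref.~\cite{dss}); the only argument available in the text is the construction in Proposition 2.1, namely the exponent $v_n(\bar a)=|\log_{a_n^{-1}}|\bar a_n/a_n||$, and your overall strategy --- isolate the signed exponent $\epsilon_n$ and run a case analysis on its limiting behaviour --- is exactly the natural route that construction suggests. So the approach is the right one; the problem is that your enumeration of cases is not exhaustive as written. You route into class (1.e) only the sequences whose exponent is ``sign unstable or sitting at the critical thresholds $\gamma\in\{0,1\}$.'' But the paper's class (1.e) is $|A'_n|=a_n\cdot a_n^{\pm\gamma^\pm_n(1+o(1))}$ with $\gamma^\pm_n\to\gamma$ for \emph{any} $\gamma\in[0,1]$, and it must in particular absorb the sequences with a \emph{stable positive} exponent converging to a value in the open interval $(0,1)$: for example $\bar a_n=n^{-3/2}$ relative to the scale $a_n=n^{-1}$, i.e.\ $\epsilon_n\equiv 1/2$. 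Such a sequence is not (1.a) (limit nonzero), not (1.b) (limit exists), not (1.c) (sign is positive), not (1.d) (limit is not $>1$), and not (1.e) under your calibration (sign stable, $\gamma=1/2\notin\{0,1\}$), so it falls through your partition entirely. These are precisely the self-dual conjugates $A^-=\lambda_n\, n^{-1-\bar k_n}$, $\bar k_n\to k\in(0,1]$, of the paper's Example 2(i), so the omission is not a fringe case but the class on which the whole self-dual/duality discussion rests. The fix is simply to let (1.e) collect every exponent whose magnitude converges to some $\gamma\in[0,1]$, of either (possibly unstable) sign, accepting that (1.e) then overlaps with (1.a) and (1.c) --- an ambiguity already present in the paper's own statement, where the $A^\pm$ versus $A'$ distinction is settled by the duality context rather than by the value of the limit alone.

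Two of your supporting justifications are also off. First, nullity does not force $k_+<1$ strictly: with $k^n_+\to 1$ from below the renormalized sequence can still be null (take $a_n=n^{-1}$ and $\bar a_n=1/\log n$, so $k^n_+=1-\frac{\log\log n}{\log n}\to 1$); nullity only gives $k_+\le 1$, and the borderline $k_+=1$ is assigned to (1.e) by convention, not excluded by a ``constant ratio'' argument. Second, the bound $k_->1$ in (1.d) is not explained by the convention that products of null sequences sit in ${\cal S}^0_0$: $a_n^2$ has exponent exactly $1$ (not $>1$), while $n^{-3}$ has exponent $2>1$ yet is still decreed irrelevant by that convention, so the product convention operates independently of the threshold $k_->1$, which is simply part of the definition of the dual class $A^-$ inherited from the duality relation. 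Neither slip wrecks the decomposition, but both should be restated as conventions rather than derived constraints.
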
 

\begin{remark}\label{not}
{\rm The 'small oh' symbol $o(1)$, in the above lemma and also in the following, and the 'big oh' $O(1)$, has the standard text book meaning: $o(1)$ denotes a null sequence, when $O(1)$ indicates the order of bound. Further, the renormalized representations of null sequences, $A_n^\pm$, for instance, are defined up to a slowly varying multiplicative factor $M _n$ such that $\frac{\log M_n}{\log a_n}=o(1)$, that is of course taken care of by the $o(1)$ factors in the exponents of the r.h.s.}
\end{remark}

Every `nontrivial' null sequence $\bar a \in {\cal S}_0$ gets mapped, by proposition (\ref{rfn}), into ${\tilde{\cal S}^a_0}$ either as $A_n^+$ or $A_n^-$. For definiteness, let it be $A_n^+$. Then, in virtue of the {\em natural} multiplicative structure that exists in the  renormalized null set ${ \tilde{\cal  S}}^a_{0}$, one also assigns the original sequence $\bar a\in {\cal S}_0$ a {\em dual} image $A_n^-$ by the relations 

\begin{equation}\label{ds1}
A_n^-/a_n =\lambda_n a_n/A_n^+, \ {\rm and} \ k_n^-\cdot k_n^+=\mu_n, \ n\geq N
\end{equation}
where $\lambda_n\sim o(1)$ (or slowly varying constant $\sim O((|\log a_n|)^k), \ k$ real constant 
and $\mu_n\sim o(a_n^{\alpha}), \ 0<\alpha<1$ are  any two positive null sequences,  ($\mu_n$ is slowly varying,   vanishing at a slower rate than $a_n$). Each renormalized sequence, for instance, $A^+$ is thus associated, in principle, with a two parameter family of {\em dual} renormalized sequences denoted explicity as $A^-(\lambda, \mu)$. In a specific problem or application, one, however, expects to weaken this huge arbitrariness to countably many or even to unique choice (upto slowly varying constants, remark (\ref{not})) for the dual elements (see examples below). For the sake of notational ease, we shall omit such explicit notations in favour of the shorter notation $A^-$ with the assumed implicit dependence on  the indicated parameters.

\begin{definition} {\bf Duality Structure}
The renormalized null set  ${\tilde{\cal S}}^a_{0}$ equipped with the renormalized valuations $v^\pm_n(\bar a)$ and the associated refined partition, as defined by proposition (\ref{rfn}), together with the natural multiplicative structure  (\ref{ds1}) is called the ``asymptotic duality structure" induced by the chosen scale. 
Relative to this  duality structure,   the sequences $A_n^0, \ A_n^{\infty}$ and $A_n^+, \ A_n^-$ are respectively dual to each other, when $A_n^{\prime}$ are self dual.
\end{definition}

\begin{example}\label{exdl}
{\rm 
(i) Let $a_n=n^{-1}$ be the scale and $A^+=n^{-1+k_n}$ where $k_n$ be a sequence converging in (0,1]. Then $v_n(A^+)=k_n\rightarrow k,$ as $n\rightarrow \infty$ and $k\in (0,1]$. Then a class of self dual sequences are $A^-=\lambda_n\cdot n^{-1-\bar k_n}$ with $v_n(A^-)=\bar k_n\rightarrow k$, when $\frac{\log \lambda_n}{\log n^{-1}}\rightarrow 0$ as $n\rightarrow \infty$. 

(ii) For a non self-dual pair, let $a_n=3^{-n}$ be the scale and $A^-_i=3^{-n}\cdot 3^{-n(i_n^{-2}2^{m_n})}$ so that $v_n^-=i_n^{-2}2^{m_n}$, where $i_n=0,1,\ldots,  2^{m_n}-1$, $i_n, \ m_n$ are slowly varying integers relative to $n$. Let $\lambda_n=3^{-n\kappa_{m_n}}$ and $\mu_{m_n}=i_n^{-1}$. Then the dual sequence $A^+_i$ is given by $A^+_i=3^{-n}\cdot 3^{-n\kappa_{m_n}}  \cdot 3^{n(i_n 2^{-m_n})}$, provided $\kappa_{m_n}=i_n^{-2}2^{m_n}$. Notice that (a)  $\lambda_n$ is fast decaying, and $\mu_n$ slowly varying, compared to the scale $a_n$,  (b) $v_n(A^-_i)\approx \kappa_{m_n}$ and $v_n(A^+_i)\approx \mu_{m_n}/\kappa_{m_n}=i_n 2^{-m_n}$, so that the duality structure (\ref{ds1}) is (approximately) respected, when $v_n^+<<1$ is neglected compared to $v_n^->>1$ (see also theorem (\ref{charac}), proposition (\ref{dual}) and remark (\ref{dual1}), for more details).}
\end{example}

\subsection{ Refined Order relation} 

The usual order relation $\leq$ in $\bf R$ gets extended over the renormalized null set via the asymptotic renormalized valuation $v_n$: two renormalized sequences $A$ and $B$ are linearly ordered viz., $A\leq B$ if and only if $v_n(A)\leq v_n(B), \ n\geq N$. Further, two renormalized sequences $A, \ B$ are equivalent i.e. $A\sim B$ if $A-B\in {\cal S}_0^0$. The archimedean field $\bf R$ now is extended to a non-archimedean field ${\bf ^*R}_a$ that is realized as the quotient space ${\bf ^*R}_a\equiv {\cal S}\backslash {\cal S}_0^0$ where $\cal S$ is the ring of Cauchy sequences of real numbers \cite{katok}. Equivalently, ${\bf ^*R}_a=[{\cal S}]$, the set of equivalence classes, when two sequences in $\cal S$ are said to be equivalent if they differ by a  sequence of ${\cal S}_0^0$. Clearly, ${\bf R}\subset {\bf ^*R}_a$, and the nonarchimedean field extension ${\bf ^*R}_a$ enjoys new asymptotic elements equipped with nontrivial asymptotic valuations. As a consequence,  the  field $^*\bf R$ is, by construction,  linearly ordered and Cauchy complete non-archimedean extension of $\bf R$. For more details, we refer to \cite{dss}.


\subsection{Duality in Extended Field} 

Since $\bf R$ is a subfield of ${\bf ^*R}_a$, the nontrivial asymptotic elements of ${\bf ^*R}_a$ are accessed via the usual limit process $x\rightarrow 0, \ x\in \bf R$ as detailed below.  

Let $x\in {\bf R}$ and $x\rightarrow 0$. Then there exists a scale $\delta=\delta (x)>0$ such that $0<\delta(x)\leq x$. One now considers the extended field  $  {\bf ^*R}_\delta \supset {\bf R}$, so that the null set $[-\delta,\delta]$ gets renormalized into a larger interval ${\bf ^*O}:= [-^*\delta, ^*\delta] \supset [-\delta,\delta]$. As a consequence, an  asymptotic real variable $x$ is mapped into a pair of   renormalized components $x\mapsto X^\pm_\delta$ satisfying $0<X^-_{\delta}(x)<\delta(x)<x \leq X^+_{\delta}(x)$,  and having nontrivial renormalized asymptotic valuations,  given by 
\begin{equation}\label{vnorm}
v_{\delta}^\pm(X^\pm_\delta(x))=\bigg  |\log_{\delta}|\frac{X^\pm_{\delta}}{\delta}|\bigg |(1+o(1)) 
\end{equation}
This pointwise asymptotic extensions must also respect the  {\em duality structure} defined by 
\begin{equation}\label{ds2}
  X^-_{\delta}X^+_{\delta}=\lambda_{\delta}\delta^2, \ {\rm and} \ 
	v_{\delta}^-(X^-_\delta)v_{\delta}^+(X^+_\delta)=\mu_{\delta}.
\end{equation}
where $\lambda_{\delta}\sim o(1)$ is a null sequence or a slowly varying constant $\lambda_{\delta}\sim O((|\log\delta|)^k), \ k$ 
a real exponent, and  $\mu_{\delta}\sim o(\delta^{a}), \ 0<a<1$ is  slowly varying   depending on $\delta$. 

Notice that, even as $x\in \bf R$ and the associated dual pairs $X^\pm_\delta\in {\bf ^*R}_\delta$ are all asymptotic, i.e. vanishing in the limit $\delta(x)\rightarrow 0$, the valuations $v_{\delta}^\pm(X^\pm_\delta(x))$ are nevertheless  finite, or slowly varying (vanishing at slower rates)  real variables. As a consequence, any vanishing or divergent quantity would enjoy nontrivial finite asymptotic values that should have significant applications in nonlinear, asymptotic dynamics. Further, the asymptotic pairs $X^\pm_\delta$  also admit the natural renormalization representations
\begin{equation}\label{renom} 
X^\pm_\delta(x)=\delta(x)\cdot\delta(x)^{\mp v_{\delta}^\pm(X^\pm_\delta)(x)}(1+o(1))
\end{equation}

To emphasize, {\em the  variables, $x$ and  $\delta$ are arbitrarily small, asymptotic quantities, but are thought to be held fixed, at the level indicated by the scale  $\delta$. The above field extension ${\bf ^*R}_\delta$ and associated renormalized asymptotics are realized and valid at the fixed scale $\delta$}. A limit of the form $x\rightarrow 0$ and/or $\delta\rightarrow 0$ then, in fact, induces a concomitant variation in the field extension ${\bf ^*R}_\delta$ itself. In the following, most of our discussions  assume the presence of a fixed scale $\delta$. Explicit scale variation is, however, used in proposition (\ref{dual}) and in the RG interpretation in Sec. 2.3.

As a consequence, and for simplicity, but with a slight abuse, of notation, we shall hence forth denote by $0<\tilde x<\delta<x$ the dual pair $X^\pm_\delta$ respectively  in ${\bf ^*R}$ ( suffix $\delta$ indicating explicit scale dependence is omitted). As should be clear, the above nontrivial dual pair, when restricted in $\bf R$, however, collapses (projects) into the original limiting real variable $x\in \bf R$. Moreover, the nontrivial asymptotic valuations are essentially functions of the rescaled logarithmic variable $\xi=\log x/\delta$, so that we set $v(x):=v_{\delta}(X^+)=v(\xi)$ and $\tilde v(\tilde x):=\tilde v_{\delta}(X^-)=\tilde v(\xi)$.

\begin{proposition}[\bf Invariance Properties]{\rm \cite{dss}}\label{invprop}
\label{invariance}
Let $x$ be an asymptotic variable in ${\bf ^*O}\subset {\bf ^*R}$ and $v(x):=v_{\delta}(x)$ be the asymptotic valuation of $x\in {\bf ^*R}$ relative to the scale $\delta$. Then \\
(1). $v(kx)=v(x)$, for any constant $k$, \\
(2).  $v_{k\delta}(x)=v_{\delta}(x),$ for any constant $k>0$, \\
(3). $v(x^{-1})=v(x)$, \\
(4). $v(x+x_0)=v(x)$, for any $x_0$ such that $0<\delta\leq |x_0|\leq |x|$, 

which are all valid up to an additive null sequence $o(1)$.
\end{proposition}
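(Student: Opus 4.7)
The plan is to work directly from the explicit formula
\begin{equation*}
v_\delta(x)=\left|\log_\delta\!\left|\tfrac{x}{\delta}\right|\right|(1+o(1))=\left|\tfrac{\log|x|-\log\delta}{\log\delta}\right|(1+o(1)),
\end{equation*}
setting $L=\log\delta$ (a large negative quantity) and $u=\log|x|/L$, so that $v(x)=|u-1|(1+o(1))$. The common mechanism underlying all four properties is that any constant-order additive perturbation of $\log|x|$, or of $L$, becomes an $o(1)$ correction after division by $L\to-\infty$; the bookkeeping simply has to be made precise in each case.

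For (1) I would write $\log|kx|=\log k+\log|x|$; dividing by $L$ gives $u+\log k/L=u+o(1)$, whence $v(kx)=|u-1|+o(1)=v(x)+o(1)$. For (2) the new base of the logarithm is $L+\log k=L(1+o(1))$, so $(\log|x|-\log k-L)/(L+\log k)=u-1+o(1)$ after a short expansion of $1/(L+\log k)=L^{-1}(1+o(1))$. For (4), the hypotheses $\delta\le |x_0|\le |x|$ give the two-sided estimate $||x|-|x_0||\le |x+x_0|\le 2|x|$, so $\log|x+x_0|=\log|x|+O(1)$ in the generic (non-cancellation) regime; dividing by $L$ again yields $u+o(1)$, and hence $v(x+x_0)=v(x)+o(1)$. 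The lower bound $|x_0|\ge\delta$ is what keeps the $O(1)$ correction genuinely bounded and prevents the argument from dropping below the scale.

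For (3) the interpretation is the main subtlety: literal inversion $x\mapsto 1/x$ sends $u\mapsto -u$, so $|u-1|$ maps to $|u+1|$, and these are not equal up to $o(1)$ in general. The formula is instead meant to be read through the duality structure (\ref{ds2}): the involution $x\mapsto \lambda_\delta\delta^{2}/x$, which exchanges the dual branches $X^{\pm}_\delta$, produces
\begin{equation*}
\log_\delta\!\left|\tfrac{\lambda_\delta\delta^{2}/x}{\delta}\right|=\log_\delta\lambda_\delta+1-\log_\delta|x|=-\log_\delta|x/\delta|+o(1),
\end{equation*}
because $\lambda_\delta$ is slowly varying and therefore $\log_\delta\lambda_\delta=o(1)$. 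The outer absolute value in the definition of $v$ then absorbs the sign, and one reads off $v(x^{-1})=v(x)+o(1)$, where ``$x^{-1}$'' is taken to denote the dual (involutory) partner of $x$ in ${}^*\mathbf R$ up to a slowly varying multiplicative prefactor.

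The principal obstacle I anticipate is precisely this reading of (3): one has to pin down in what sense $x^{-1}$ is taken, and then recognize that the involution from (\ref{ds2}) is exactly the one under which $v$ is invariant; once that identification is made the computation collapses to the one-line calculation displayed above. Everything else is a routine exploitation of the fact that $|\log\delta|\to\infty$ dominates every $O(1)$ perturbation.
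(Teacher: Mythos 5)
Your treatment of (1), (2) and (4) is exactly the argument the paper has in mind: the paper's proof is the single remark that everything ``follows directly from the definition (\ref{vnorm})'', i.e.\ precisely your observation that any $O(1)$ additive perturbation of $\log|x|$ or of $\log\delta$ is killed upon division by $\log\delta\to-\infty$ (your non-cancellation caveat in (4) is consistent with how the paper actually uses translation invariance, namely for sums of positive asymptotics in the ultrametric proof). Where you diverge from the paper is (3). You correctly spot that literal inversion at the \emph{same} scale $\delta$ fails ($|u-1|$ versus $|u+1|$), but your fix --- reading ``$x^{-1}$'' as the duality involution $x\mapsto\lambda_\delta\delta^2/x$ of (\ref{ds2}) --- is not the paper's intended reading. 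The paper resolves the same difficulty by transforming the scale along with the variable: it states explicitly that ``the scale for $x^{-1}$ is $\delta^{-1}$'', so that
\begin{equation*}
v_{\delta^{-1}}(x^{-1})=\left|\frac{\log|x|^{-1}-\log\delta^{-1}}{\log\delta^{-1}}\right|=\left|\frac{\log|x|-\log\delta}{\log\delta}\right|=v_{\delta}(x),
\end{equation*}
an exact one-line identity for the genuine multiplicative inverse. Your computation is valid, but it proves a different statement: with $\lambda_\delta$ slowly varying it is essentially the self-dual relation (\ref{sde}) of Theorem \ref{charac} (invariance of $v$ under passage to the dual partner), rather than inversion invariance of the literal $x^{-1}$. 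So the proposal is correct modulo this reinterpretation; the paper's route for (3) is both simpler and faithful to the wording, while yours buys a reconfirmation of the duality structure's compatibility with $v$ that the paper records separately.
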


The proof of above properties follow directly from the definition (\ref{vnorm}). In the case of inversion invariance (3), the scale for $x^{-1}$ is ${\delta}^{-1}$. The property (1) is the scale invariance and (2) is the invariance under scale reparametrization. Because of (1) and (2) of the above proposition, the representations  (\ref{vnorm}) are called the {\em normal} forms of the dual asymptotic variables in ${\bf ^*R}$. The translation invariance (4) is responsible for the ultrametric property of the valuation.

We recall that, by convention, the product of two null sequences $a,b\in {\cal S}_0$ is assumed to be irrelevant and hence is null even in the present setting. A multiplicative structure in $\bf ^*O$ is, however, induced by the renormalized valuation $v$. 
\begin{definition}\label{prod}
The product of two asymptotics $x_1$ and $x_2$ in $\bf ^*O$ is defined by the renormalized asymptotic $x_1ox_2:=\delta\cdot\delta^{-v(x_1)v(x_2)}$, so that $v(x_1ox_2)=v(x_1)v(x_2)$. Such a product is called renormalized product for null asymptotics.
\end{definition}

\begin{lemma}
The product $x_1ox_2$ belongs to $\bf ^*O$, and hence exists uniquely and well-defined.
\end{lemma}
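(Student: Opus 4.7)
The plan is to verify the claim directly from the normal-form representation (\ref{renom}) together with the invariance properties of the asymptotic valuation recorded in Proposition \ref{invprop}. The argument decomposes into two parts: first, that the prescription $x_1ox_2:=\delta\cdot\delta^{-v(x_1)v(x_2)}$ does produce a legitimate element of ${\bf ^*O}$; second, that this element depends only on the equivalence classes of $x_1,x_2$ in ${\bf ^*O}$, not on the particular representatives chosen.

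For existence and membership, I would begin by observing that for any $x_i\in {\bf ^*O}$ the asymptotic valuation $v(x_i)$ given by (\ref{vnorm}) is a non-negative quantity (a finite real number, or a slowly varying sequence in the sense of Remark \ref{not}). Setting $w:=v(x_1)v(x_2)\geq 0$, the candidate element $z:=\delta\cdot\delta^{-w}$ is, by construction, already in the normal form (\ref{renom}) with associated valuation $w$, and hence falls into exactly one of the renormalized subclasses catalogued by the refined partition of Proposition \ref{rfn}: an $A^+$-type asymptotic when $0<w<1$, an $A^-$-type asymptotic when $w>1$, a self-dual $A^\prime$-type element at the critical value $w=1$, and (up to irrelevant corrections) the scale $\delta$ itself in the degenerate case $w=0$. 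In every case $z$ is a relevant renormalized null asymptotic, and hence by construction belongs to ${\bf ^*O}$. The identity $v(x_1ox_2)=v(x_1)v(x_2)$ then follows from a direct application of (\ref{vnorm}) to $z$.

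For uniqueness and well-definedness, I would appeal to the scale and scale-reparametrization invariances (1) and (2) of Proposition \ref{invprop}, together with the convention of Remark \ref{not} that the normal-form representation is specified only up to slowly varying multiplicative factors $M$ with $\log M/\log\delta=o(1)$. If $x_i'$ is another representative of the same class as $x_i$, namely $x_i-x_i'\in {\cal S}_0^0$, then $v(x_i)=v(x_i')$ up to an additive $o(1)$, whence $w=v(x_1)v(x_2)$ agrees with $v(x_1')v(x_2')$ up to $o(1)$, and the two resulting normal forms differ only by a slowly varying factor absorbed into the equivalence class. Thus $x_1ox_2$ is uniquely determined as an element of ${\bf ^*O}$.

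The main subtlety I anticipate is not in the algebraic manipulation but in accounting cleanly for the boundary cases $w=0$ and $w=1$: the former requires one to verify that collapsing to the scale $\delta$ itself is consistent with membership in ${\bf ^*O}$, while the latter is precisely the self-dual regime handled by the $A^\prime$-class of Proposition \ref{rfn}. Once these borderline cases are explicitly absorbed into the general statement, the remainder of the argument reduces to a direct reading of the normal-form representation and the invariance properties already at hand.
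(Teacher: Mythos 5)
The paper offers essentially no argument for this lemma---its proof reads, in full, ``immediate from the definition''---so your two-step expansion (membership via the normal form, then independence of representatives) goes beyond what is on the page. The well-definedness half is fine: valuations of representatives differing by an element of ${\cal S}_0^0$ agree up to $o(1)$, and such discrepancies are exactly what the slowly varying factors of Remark \ref{not} absorb.

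The concrete flaw is in your membership case analysis. Under the paper's conventions an $A^-$-type asymptotic carries a \emph{positive} exponent, $|A^-|=\delta\cdot\delta^{+k}$ with $k>1$ (Proposition \ref{rfn}(1.d), equation (\ref{renom})), i.e. it vanishes faster than the scale. Your candidate $z=\delta\cdot\delta^{-w}$ with $w>1$ is not of this type: it equals $\delta^{1-w}$, which diverges as $\delta\rightarrow 0$ and therefore lies outside ${\bf ^*O}$ altogether, so classifying it as $A^-$ is a sign error, and the claim ``in every case $z$ is a relevant renormalized null asymptotic'' fails on that branch as stated. The cleaner route---and presumably what ``immediate'' is meant to convey---is to invoke the boundedness of the valuation on ${\bf ^*O}$ recorded just before Proposition \ref{comp}: for the relevant asymptotics $\delta<x\leq\delta^*$ one has $0<v(x)<1$, hence $w=v(x_1)v(x_2)<\min\{v(x_1),v(x_2)\}<1$, so that $\delta<\delta\cdot\delta^{-w}\leq\delta^*$ and the product is again an $X^+$-type element of ${\bf ^*O}$; your $w\geq 1$ branches simply never arise. (If one insists on admitting factors $\tilde x$ with $v(\tilde x)>1$, then the displayed formula with its fixed minus sign genuinely leaves ${\bf ^*O}$, and the definition would need its sign adjusted to the type of the output---a point neither you nor the paper addresses.) With that correction your argument is sound and considerably more explicit than the paper's.
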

The proof is immediate from the definition. The algebraic properties of this definition of multiplication will be studied elsewhere. 

\begin{proposition}[\bf Ultrametric Property]{\rm \cite{dss}}\label{ultram}
The asymptotic valuation $v:{\bf ^*O}\mapsto \bf R^+$ acting on  ${\bf ^*O}:=(-^*\delta,^*\delta)\subset {\bf ^*R},$ and $(-^*\delta,^*\delta)\supset (-\delta,\delta), \ \delta\rightarrow 0$,   is a discretely valued  ultrametric norm (absolute value). Furthermore,  ${\cal O}=(-v(\xi),v(\xi))\delta\log \delta^{-1} +(-1,1)o(\delta)$ is the associated asymptotic extension of the linear neighbourhood $(-\delta,\delta)\subset \bf R$ of 0, when $v$ leaves in the right neighbourhood of $\delta$ in ${\bf ^*R}$.
\end{proposition}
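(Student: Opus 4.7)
The plan is to establish the ultrametric-norm axioms for $v$ on $\bf ^*O$ in sequence, then verify discreteness from the partition structure, and finally read off the geometric form of $\cal O$ by Taylor-expanding the normal form near the scale $\delta$.

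First I would dispatch the elementary axioms. Non-negativity $v(x)\geq 0$ is built into the outer absolute value in (\ref{vnorm}); definiteness reduces, modulo $o(1)$, to $|x/\delta|=1$, so that the effective ``zero'' of $v$ is the renormalized identity $\pm\delta$ of $\bf ^*O$ (the asymptotic origin, not the classical $0\in\bf R$). Multiplicativity under the renormalized product, $v(x_1 o x_2)=v(x_1)v(x_2)$, holds by construction in Definition (\ref{prod}). For the ultrametric triangle inequality $v(x+y)\leq\max(v(x),v(y))$, I would invoke invariance property (4) of Proposition (\ref{invariance}): when $v(x)>v(y)$ and both asymptotics lie on the same branch ($X^+$ or $X^-$), the normal form (\ref{renom}) shows that the larger valuation corresponds to the larger asymptotic magnitude on that branch, so $y$ plays the role of the base point $x_0$ in property (4) and translation invariance forces $v(x+y)=v(x)$.

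Discreteness of the valuation is then read off from the refined partition of Proposition (\ref{rfn}): every element of $\tilde{\cal S}_0^a$ falls into one of the five countably parametrized renormalized classes $A_n^0,A_n^\infty,A_n^\pm,A_n^\prime$, so $v({\bf ^*O})\subset\bf R^+$ is a countable, totally disconnected value set, which is exactly what is meant by a discretely valued norm in the sense of \cite{katok}.

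For the explicit form of $\cal O$, I would work in the right neighbourhood of $\delta$, where $v^\pm\to 0$ as $X^\pm\to\delta$. Starting from the normal form $X^\pm_\delta=\delta\cdot\delta^{\mp v^\pm(\xi)}(1+o(1))$ and expanding $\delta^{\mp v^\pm}=\exp(\mp v^\pm\log\delta)$ to first order in the small parameter $v^\pm\log\delta^{-1}$, one obtains $X^\pm_\delta-\delta=\pm v^\pm(\xi)\,\delta\log\delta^{-1}(1+o(1))$ up to quadratic corrections absorbed in the $o(\delta)$ error. Collecting both branches under the duality prescription (\ref{ds2}) and recentring at the origin then produces the claimed shape ${\cal O}=(-v(\xi),v(\xi))\delta\log\delta^{-1}+(-1,1)\,o(\delta)$.

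The main obstacle I anticipate is the strong triangle inequality. Unlike the archimedean absolute value on $\bf R$, the valuation $v$ is not globally monotone in $|x|$: because $\log|x/\delta|$ reverses sign across $\delta$, the valuation is monotone on the $X^+$ and $X^-$ branches in \emph{opposite} senses. Matching the case analysis of the four possible branch combinations of $(x,y)$ with invariance (4) -- which is stated in terms of the magnitude of the base point -- therefore requires careful bookkeeping of which summand dominates in each case, together with a delicate treatment of the boundary regime $v(x)\approx v(y)$ via the slowly varying $(1+o(1))$ corrections, so as to preserve the ultrametric inequality in at least its weak form.
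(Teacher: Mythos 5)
Your handling of the triangle inequality and of the shape of $\cal O$ follows essentially the paper's own route: the paper also establishes the strong triangle inequality by showing that the sum is dominated by one summand, the correction $\log(1+\tilde x_2/\tilde x_1)/\log\delta^{-1}$ being $o(1)$ (which is precisely the mechanism behind property (4) of proposition (\ref{invprop})), and it obtains the form of $\cal O$ from the extension $\delta\mapsto\delta^*=\delta\cdot\delta^{-v(\xi)}$, which your first-order expansion $\delta\cdot\delta^{\mp v}=\delta\bigl(1\pm v\log\delta^{-1}+\ldots\bigr)$ merely makes more explicit; that part of your plan is fine and arguably clearer than the paper's one-line conclusion. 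One caveat: the case you defer to ``bookkeeping'', namely both asymptotics below the scale ($0<\tilde x_1<\tilde x_2<\tilde x_1+\tilde x_2<\delta$), is exactly the case the paper computes in detail, and property (4) cannot be cited there verbatim since it is stated only for $|x_0|\geq\delta$; you must repeat the logarithmic estimate on that branch, though this is the same one-line argument and not a real obstacle.

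The genuine gap is your justification of the word \emph{discretely} valued. The inference from proposition (\ref{rfn}) does not work: the classes $A_n^{\pm}$ are parametrized by the limits $k_{\pm}$, which range over whole intervals of $\bf R$, so the refined partition does not produce a countable value set; and even countability would not give discreteness, since a countable value set can be dense --- indeed the paper's remark following definition (\ref{defrmd}) and proposition (\ref{dual}) explicitly allow, and later exploit, densely distributed values to build the staircase structure, so discreteness cannot be a consequence of the classification alone. The paper's proof instead obtains it constructively: having the strong triangle inequality, it \emph{assigns} an associated exponential valuation $v(\tilde x)=\delta^{a(\tilde x)}$ with $a(\tilde x_1\tilde x_2)=a(\tilde x_1)+a(\tilde x_2)$ and value group isomorphic to the additive integers in the sense of \cite{katok}, i.e.\ the discretely valued property is imposed through this assignment at the fixed scale $\delta$ (and is what then feeds the renormalized product rule of definition (\ref{prod})), not deduced from the partition. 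As written, your proposal asserts rather than proves this part of the statement, and a corrected version should either reproduce the paper's assignment of an integer-valued exponent or supply some other argument restricting the value set at fixed $\delta$.
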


\begin{definition}\label{defrmd}
The asymptotic extension of $0=\{[-\delta,\delta], \ \delta\rightarrow 0\}\in \bf R$ into ${\bf 0}=0+\cal O$ is called the asymptotic deformation of the point 0. This definition extends over each point $r\in \bf R$, by translation. The associated asymptotic deformation of $\bf R$ induced by the asymptotic valuation $v$ is denoted by $\cal R$.
\end{definition}

\begin{remark}{\rm The valuation can therefore be either trivial or can have a discrete, non-dense or dense,  set of distinct real values, that can spread continuously over the extended neighbourhood of 0. In the later case, the valuation $v$ has the structure of a devil's staircase function (for a proof see \cite{dss}, for a new characterization proposition (\ref{dual})). Further, the nontrivial extensions of 0, i.e. $\bf ^*O$ in $\bf ^*R$ and ${\cal O}\subset \cal R$ are quite distinct; $\bf ^*O$ is totally disconnected in the ultrametric topology (c.f. proposition (\ref{comp})), when $\cal O$ is {\em connected}, and generally, denotes a fractal curve. In the special case, when $v$ is the {\em trivial ultrametric}, the said fractal curve {\em degenerates into a connected line segment}. For completeness, we include the proof of the above proposition.
}
\end{remark}

\begin{proof}
First, we prove strong triangle inequality \cite{katok} for  asymptotics in $(0,\delta)$. Let $0<\tilde x_1<\tilde x_2<\tilde x_1+\tilde x_2<\delta$ such that $v(\tilde x_1)>v(\tilde x_2)$. We have, $v(\tilde x_1+\tilde x_2)=\bigg |\log_{\delta^{-1}} \frac{\delta}{\tilde x_1+\tilde x_2}\bigg |=|\log_{\delta^{-1}} \frac{\tilde x_1(1+{\tilde x_2}/\tilde x_1)}{\delta}|=v(\tilde x_1)$, in the limit $\delta\rightarrow 0$, since the second term $\frac{\log(1+{\tilde x_2}/\tilde x_1)}{\log \delta^{-1}}$ vanishes in that limit, the numerator $\log(1+{\tilde x_2}/\tilde x_1)$ being at most finite. As a result, $v(\tilde x_1+\tilde x_2)\leq{\rm max}\{v(\tilde x_1),v(\tilde x_2)\}$ \cite{katok}. 

Next, in virtue of the strong triangle inequality, one can assign discretely valued exponential valuation of the form $v(\tilde x_1)=\delta^{a(\tilde x_i)}$ for a $0<\delta<1$, satisfying  $a(\tilde x_1+\tilde x_2)\geq{\rm min}\{a(\tilde x_1),a(\tilde x_2)\}$ and $a(\tilde x_1\tilde x_2)=a(\tilde x_1)+a(\tilde x_2)$, the value group being isomorphic to the additive group of  integers \cite{katok}.  
The relation $v(\tilde x_1o\tilde x_2)=v(\tilde x_1)v(\tilde x_2)$ then follows naturally, when the product of two asymptotics is interpreted as the renormalized product, definition (\ref{prod}). 

Similar arguments also apply  for asymptotics satisfying $\delta\leq x_1<x_2<x_1+x_2$ and $v(x_2)>v(x_1)$.

Finally, in the limit $x, \ \delta\rightarrow 0$ ($0<\delta<x$), $\delta\in {\bf R} \ \mapsto \ {\bf ^*R}\ni\delta^*=\delta\cdot\delta^{- v(\xi)}$, satisfying the inequalities $0<\delta<x\leq \delta^*$, so that nontrivial asymptotic limit of $\cal O$ has the form indicated as above.
\end{proof}

As a consequence, the open balls in $B(a, r)=\{x\in {\bf ^*O}: \ v(x-a)<r\}$ are clopen i.e. both closed and open and hence, $\bf ^*O$ can at most be a countable union of  clopen balls and hence totally disconnected \cite{katok}. Further, $\bf ^*O$ is also bounded under the valuation $v$, since $0<v(x)<1$, by definition, for an asymptotic $x$ satisfying $0<\delta<x\leq \delta^*$. Asymptotics $\tilde x: \ v(\tilde x)>1$ must satisfy $0<\tilde x<\delta$ and hence are identified essentially as null \cite{dss}. Accordingly, it follows that 

\begin{proposition}\label{comp}
The extended set $\bf ^*O$ is totally disconnected, compact and complete metric space in the ultrametric valuation.
\end{proposition}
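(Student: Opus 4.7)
The plan is to verify the three properties — total disconnectedness, completeness and compactness — one at a time, using the ultrametric machinery already established in Proposition \ref{ultram} together with the construction of ${\bf ^*R}_\delta$ as a Cauchy-complete quotient.

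Total disconnectedness is essentially already carried out in the paragraph preceding the statement: because every open ball $B(a,r)=\{x\in {\bf ^*O}: v(x-a)<r\}$ is simultaneously closed in the ultrametric topology, the connected component of any point $a\in {\bf ^*O}$ must be contained in the intersection $\bigcap_{r>0} B(a,r)$, which collapses to $\{a\}$. I would simply restate this observation cleanly, then note that the whole space decomposes as a (at most countable) disjoint union of clopen balls, so that no nontrivial connected set can exist inside ${\bf ^*O}$.

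For completeness, I would invoke the construction recalled in Sec.~2.1: the enclosing field ${\bf ^*R}_\delta \equiv {\cal S}\backslash {\cal S}_0^0$ is by construction a Cauchy-complete non-archimedean extension of ${\bf R}$. The set ${\bf ^*O}$ is, in the ultrametric, the unit ball $\{x:v(x)\leq 1\}$ (equivalently $0<v(x)<1$ together with its limit points), and balls are clopen. Hence ${\bf ^*O}$ is a closed subset of a complete metric space and is therefore itself complete.

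For compactness, I would use the standard equivalence in metric spaces: complete plus totally bounded equals compact. Completeness is the previous step, so only total boundedness remains. The proof of Proposition \ref{ultram} has already shown that the valuation takes values of the form $v(\tilde x)=\delta^{a(\tilde x)}$, with $a(\tilde x)\in \mathbb Z_{>0}$ on ${\bf ^*O}$, so the value group is isomorphic to $\mathbb Z$. Given $\varepsilon>0$, pick $n$ with $\delta^n<\varepsilon$. To produce a finite $\varepsilon$-net, I would appeal to the refined partition of Proposition \ref{rfn} restricted to sequences of valuation at most $n$: at each discrete level the exponents $\alpha_n,\beta_n,k^n_\pm,\gamma_n^\pm$ are finite and bounded, the duality relation (\ref{ds2}) pairs them in a controlled way, and the free parameters $\lambda_\delta,\mu_\delta$ only contribute slowly varying factors that are absorbed into $o(1)$ (cf.\ Remark \ref{not}). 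This yields finitely many asymptotic equivalence classes modulo $\delta^n$, a representative of each gives the required finite cover by balls of radius $\delta^n$, and total boundedness follows.

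The main obstacle is the finiteness claim in the compactness step: one must argue that the refined partition together with the duality constraint (\ref{ds2}) really produces only finitely many equivalence classes at each valuation level, in spite of the apparent freedom in $(\lambda_\delta,\mu_\delta)$. The resolution is that these parameters are slowly varying relative to the scale $\delta$ and hence do not affect the valuation level (by Proposition \ref{invariance}, parts (1)--(2)); once this is spelled out, the count reduces to the five classes of Proposition \ref{rfn} at each fixed $n$, and the proof concludes.
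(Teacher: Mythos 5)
The compactness step has a genuine gap. Your reduction of the $\varepsilon$-net count ``to the five classes of Proposition \ref{rfn} at each fixed $n$'' does not give total boundedness: the classes of the refined partition are \emph{types} of sequences, not cosets modulo balls of radius $\delta^n$. Within the single class $\{A^+_n\}$, for instance, the limiting exponent $k_+$ ranges over all of $(0,1)$, so that class alone contains elements with infinitely many distinct valuations; and by the ultrametric equality $v(x-y)=\max\{v(x),v(y)\}$ whenever $v(x)\neq v(y)$, any two such elements whose valuations exceed $1/2$ are at mutual distance exceeding $1/2$. In the situation the paper actually needs later (a dense value set, proposition \ref{dual}), there are infinitely many such elements, so no finite $1/4$-net exists and your construction cannot close; the slow variation of $\lambda_\delta,\mu_\delta$ (Remark \ref{not}, proposition \ref{invprop}) is irrelevant to this count. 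Note also that the remark in the proof of proposition \ref{ultram} that the value group is isomorphic to the integers concerns the exponential reparametrization $v=\delta^{a}$, not finiteness of residue classes, which is what ``complete $+$ totally bounded'' really requires: a discretely valued, bounded, complete ultrametric space need not be compact (the valuation ring of formal power series over an infinite coefficient field is the standard counterexample, in contrast to $\mathbb{Z}_p$ whose residue field is finite). You cannot repair this step by citation either, since the paper's own proof of proposition \ref{comp} addresses only completeness (arguing that in the discretely valued ultrametric every Cauchy sequence is eventually constant, hence convergent) and offers no total-boundedness argument for the compactness claim.

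On completeness your route is genuinely different from the paper's, and it carries a circularity risk. You invoke the Cauchy completeness of ${\bf ^*R}_\delta$ ``by construction'' from Sec.~2.1 and then take ${\bf ^*O}$ as a closed (clopen) ball; but the completeness asserted in Sec.~2.1 is for the quotient-field construction, prior to and independent of the valuation $v$, whereas completeness of ${\bf ^*R}$ as a metric space under the asymptotically extended metric is stated only in the corollary that \emph{follows from} proposition \ref{comp}. To make the closed-subset argument legitimate you would first have to prove completeness of the ambient space in the $v$-metric, which is essentially the same work the paper does directly via eventually constant Cauchy sequences. The total disconnectedness part of your argument coincides with the paper's (clopen balls, countable disjoint union) and is fine.
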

\begin{proof}
Completeness follows from the fact that every Cauchy sequence is eventually constant, in the discretely valued ultrametric, and hence convergent \cite{katok}.
\end{proof}

\begin{corollary}
The nonarchimedean field $\bf ^*R$ is a complete and locally compact metric space under the asymptotically extended metric defined by the usual norm $|\cdot|$ acting in $\bf R$ and the asymptotic valuation $v$ acting on the asymptotic neighbourhood $\bf ^*O$. 
\end{corollary}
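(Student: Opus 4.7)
The plan is to exhibit $\bf ^*R$ as the translation-invariant product $\bf R\times\bf ^*O$ via the decomposition ${}^*r=r+\varepsilon$ with $r\in\bf R$ and $\varepsilon\in\bf ^*O$ afforded by Definition \ref{defrmd}, and then to transfer completeness and local compactness from the two factors. The first factor is complete and locally compact in the usual Euclidean norm by the classical Heine--Borel theorem, and by Proposition \ref{comp} the second factor is compact and complete in the ultrametric valuation $v$. The asymptotically extended metric of the corollary is realised concretely as the product metric $d({}^*r_1,{}^*r_2):=|r_1-r_2|+v(\varepsilon_1-\varepsilon_2)$, whose metric axioms follow from the ordinary triangle inequality on $\bf R$ together with the strong triangle inequality of Proposition \ref{ultram} on $\bf ^*O$.

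For completeness I would start with an arbitrary $d$-Cauchy sequence $\{{}^*r_n\}=\{r_n+\varepsilon_n\}$. Since $|r_n-r_m|\le d({}^*r_n,{}^*r_m)$ and $v(\varepsilon_n-\varepsilon_m)\le d({}^*r_n,{}^*r_m)$, the coordinate projections are individually Cauchy in their respective factors. By completeness of $\bf R$ the real parts converge to some $r\in\bf R$; by the completeness half of Proposition \ref{comp} the asymptotic parts converge to some $\varepsilon\in\bf ^*O$; hence ${}^*r_n\to r+\varepsilon\in\bf ^*R$. For local compactness I would take, around each $^*r=r+\varepsilon\in\bf ^*R$, the neighbourhood $K_\eta({}^*r):=[r-\eta,r+\eta]+\bf ^*O$ for an arbitrary $\eta>0$. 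Under the product identification $K_\eta({}^*r)$ is the product of a compact real interval with the compact space $\bf ^*O$, hence compact by Tychonoff; moreover any open $d$-ball of radius $\rho<\eta$ around $^*r$ is contained in $K_\eta({}^*r)$ (real parts satisfy $|r_s-r|<\rho<\eta$, asymptotic parts lie in $\bf ^*O$), so $^*r$ admits a compact neighbourhood.

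The main obstacle I foresee is the preliminary step of reconciling the corollary's verbal description of the extended metric with the explicit product metric used above. This reduces to verifying that the translation decomposition ${}^*r\mapsto(r,\varepsilon)$ is a homeomorphism onto $\bf R\times\bf ^*O$, which rests on the translation invariance ${\bf ^*O}(r)=r+\bf ^*O$ (Definition \ref{defrmd}) together with the translation invariance of the valuation itself (item (4) of Proposition \ref{invprop}). Once these compatibility checks are in place the rest of the argument is formal, and the corollary follows directly from classical Heine--Borel and Proposition \ref{comp}.
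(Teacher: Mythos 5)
Your proposal is correct and is essentially the argument the paper leaves implicit: the corollary is stated as an immediate consequence of Proposition \ref{comp} together with the classical completeness and local compactness of $\bf R$, transferred through the translation decomposition ${\bf ^*O}(r)=r+{\bf ^*O}$, which is exactly the product-metric argument you spell out. Your write-up merely makes explicit the compatibility checks (well-definedness of the product metric via the strong triangle inequality and translation invariance) that the paper takes for granted.
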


Based on this observation and also because of the duality structure (\ref{ds2}), we have following classifications of the renormalized asymptotics and the associated valuations.

\begin{definition}[{\bf Classification}]\label{class}
Renormalized asymptotics $\tilde x$ and $x$ in $\bf ^*O$ are respectively (i) self dual,  (ii) weakly self  dual, (iii) critically self dual, or (iv) strictly dual to each other if (i) $\tilde v(\tilde x)= v(x)$,	(ii) $\tilde v(\tilde x)=\alpha(\delta) v(x)$, (iii)  self dual  limit $\alpha\rightarrow 1$ of (ii), if it exists, or (iv) $\tilde v(\tilde x)v(x)=\mu(\delta)$, where $ \alpha> 0, \ \mu>0$ are slowly varying parameters depending on $\delta$. The self (or strictly) dual  pair $\tilde x  {\rm \ and}\ x$ are called self (or strictly) dual conjugates. These definitions are valid and meaningful up to a remainder term $o(\delta)$.
\end{definition}

\begin{remark}{\rm
In an application, we say an asymptotic $x$ is dominantly self dual if the magnitude of $v(x)$ is much greater than $O(\delta)$. In such a case, weakly self dual or strictly dual asymptotics may contribute to a nontrivial remainder, intermediate to the dominant self dual term and $O(\delta)$. When, on the other hand, a self dual satisfying $v(x)\leq O(\delta)$, and hence is insignificant, strictly dual asymptotics would play dominant role in determining the nontrivial neighbourhood structure of $\cal  O$.}
\end{remark}

\begin{example}{\rm 
The self dual pair of Example 1(i) is actually immersed into a family of weakly self dual elements given by $A^{\alpha -}=\lambda_n\cdot n^{-1-\alpha_n\bar k_n}$ with $v_n(A^{\alpha -})=\alpha_n\bar k_n\rightarrow \alpha k +o(n^{-1})$ so that $v(A^{\alpha -})=\alpha v(A^+) +o(n^{-1})$. The pair is critically  self dual if the critical limit $\alpha\rightarrow 1$ exists.
}
\end{example}

\begin{theorem}[{\bf Characterization}]\label{charac}
Renormalized asymptotics $\tilde x$ and $x$ in $\bf ^*O$ are respectively (i) weakly self dual, (ii) self dual or (iii) strictly dual to each other, if and only if the scaling parameter $\lambda_{\delta}=\lambda(\delta)$ in (\ref{ds2}) has the form  \\
  {\rm (i)} $\lambda(\delta) = \lambda_0 (|\log \delta|)^{k}$ (1+o(1))\\
  {\rm (ii)} $\lambda(\delta) = \lambda_0 $ (1+o(1))\\
	{\rm (iii)} $\lambda(\delta) = \lambda_0 {  \delta}^{\kappa}$(1+o(1)) \\
where $\lambda_0=\lambda_0(\delta)>0, \ k=k(\delta) \ {\rm real \ and}, \ \kappa=\kappa(\delta)>0$ are  slowly varying constants, up to  additive null $o(1)$  sequences . Consequently, the weakly self dual  asymptotic valuation $v(x)$ has the universal form given  by 
\begin{equation}\label{wsd}
v(x)=\gamma(\delta) \bigg|\frac{\log\log\delta^{-1}}{\log \delta}\bigg |(1+o(1)), \ \gamma(\delta)=\frac{k(\delta)}{(\alpha(\delta)-1)}>0, 
\end{equation}
so that $k\lessgtr 0 \ \Leftrightarrow \ \alpha\lessgtr 1$.	
The valuation for  critically self dual asymptotic is then given by (\ref{wsd}), but with the  limiting form of the  multiplicative factor $\gamma(\delta)\rightarrow \tilde\gamma(\delta)=\lim\frac{k(\delta)}{(\alpha(\delta)-1)}$, when the limit exists, 
where the limit in the right hand side corresponds to the {\em critical self dual limit}: $\alpha\rightarrow 1, \ k\rightarrow 0$, satisfying the above finiteness constraint. For a dual asymptotic, the valuation, on the other hand,   is given by the quadratic equation
\begin{equation}\label{dl}
v(x)^2+\kappa(\delta)v(x)-\mu(\delta)=o(1) 
\end{equation}
Finally, for a self dual asymptotic, (ii) yields 
\begin{equation}\label{sde}
v(x)=\tilde v(\tilde x) + o(1)
\end{equation}	
as expected.
\end{theorem}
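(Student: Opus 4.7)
The plan is to reduce both halves of the duality structure (\ref{ds2}) to algebraic constraints on the valuations by inserting the normal-form representation (\ref{renom}). Substituting $X^\pm_\delta = \delta\cdot\delta^{\mp v^\pm}(1+o(1))$ into $X^-_\delta X^+_\delta = \lambda(\delta)\delta^2$ and taking $\log_\delta$ yields the master identity
\begin{equation*}
v^-(\tilde x)-v^+(x)\;=\;\frac{\log\lambda(\delta)}{\log\delta}+o(1).
\end{equation*}
This single identity, together with the second half $v^-(\tilde x)\,v^+(x)=\mu(\delta)$, carries all three regimes; the task then becomes matching the asymptotic order of $\log\lambda(\delta)/\log\delta$ against each of the three Ans\"atze on $\lambda$.

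For case (ii) I would substitute $v^-=v^+$ to deduce $\log\lambda/\log\delta = o(1)$, which (by the very definition of a slowly varying factor, $\log\lambda_0 = o(\log\delta)$) is equivalent to $\lambda = \lambda_0(1+o(1))$; equation (\ref{sde}) then drops out of the definition of self-duality. For case (i), inserting $v^-=\alpha v^+$ gives $(\alpha-1)v^+ = \log\lambda/\log\delta + o(1)$, and the Ansatz $\lambda=\lambda_0|\log\delta|^k$ is precisely the family for which this right-hand side is of the order $\log|\log\delta|/\log\delta$, i.e.\ $o(1)$ but strictly larger than any $O(\delta^{a})$. Solving for $v^+$ and rewriting $\log|\log\delta|/|\log\delta|$ as $|\log\log\delta^{-1}/\log\delta|$ produces the universal form (\ref{wsd}) with $\gamma = k/(\alpha-1)$, and the sign compatibility $k\lessgtr 0\Leftrightarrow\alpha\lessgtr 1$ is forced by $v^+>0$. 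The critical self-dual formula then appears by continuity: taking $\alpha\to 1$ and $k\to 0$ while keeping $k/(\alpha-1)$ bounded preserves the functional form and defines $\tilde\gamma = \lim k/(\alpha-1)$.

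For case (iii), the distinguishing feature of $\lambda = \lambda_0\delta^\kappa$ is that $\log\lambda/\log\delta\to\kappa$, a genuine constant rather than $o(1)$, so the master identity collapses to the linear relation $v^- - v^+ = \kappa + o(1)$. Coupling this with the second duality equation $v^-v^+=\mu$ and eliminating $v^-$ gives exactly the quadratic (\ref{dl}). The converse (``only if'') implications in all three cases are obtained by running the master identity backwards: each Ansatz on $\lambda$ corresponds to a distinct asymptotic scale for $\log\lambda/\log\delta$ (respectively $o(1)$ but not $O(1/\log\delta)$; $O(1/\log\delta)$; order $1$), and these scales are mutually exclusive.

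The main obstacle will be the ``only if'' direction and the clean separation of the three regimes, since a priori $\lambda(\delta)$ is just a positive function and one must verify that the three prescribed Ans\"atze exhaust every asymptotic behaviour compatible with (i), (ii), (iii). The key technical ingredient is the Karamata-type characterisation of slow variation through $\log\lambda_0/\log\delta\to 0$, together with the strict ordering of the asymptotic scales $\delta^\kappa \ll 1 \ll |\log\delta|^k$ as $\delta\to 0^+$; keeping the $o(1)$ remainders under control when passing between the logarithmic regime of (i)--(ii) and the power regime of (iii) is where the argument needs the most care.
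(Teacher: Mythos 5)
Your proposal follows essentially the same route as the paper's own proof: take logarithms of the first duality relation in (\ref{ds2}) using the normal form (\ref{renom}) to get $v(x)=\tilde v(\tilde x)+\frac{\log\lambda}{\log\delta^{-1}}+o(1)$, then match each Ansatz for $\lambda(\delta)$ against the definitions in (\ref{class}) to obtain (\ref{wsd}), (\ref{sde}) and, via elimination with $\tilde v\,v=\mu$, the quadratic (\ref{dl}), with the critical case recovered as the limit $\alpha\to 1,\ k\to 0$. If anything, your explicit attention to the ``only if'' direction and the mutual exclusivity of the three asymptotic scales of $\log\lambda/\log\delta$ is more careful than the paper's one-line assertion of the same point.
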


\begin{proof}
First equality of the duality structure (\ref{ds2}), together with (\ref{vnorm}), implies that $v(x)=\tilde v(\tilde x)+ \frac{\log\lambda}{\log\delta^{-1}} +{ lower \ order \ terms}$. The cases (i) and (iii) then uniquely identify the weakly self dual and strictly dual asymptotics following respectively (\ref{wsd}) and (\ref{dl}). The critically self dual case  then follow from (\ref{wsd}) in the {\em critical limit} defined by $k\rightarrow 0$ and $ \alpha\rightarrow 1$ simultaneously, so that $\tilde\gamma(\delta)=\underset{\alpha\rightarrow 1,  k\rightarrow 0}\lim \gamma(\delta)$ exists finitely. Notice also that the limit $\kappa\rightarrow 0$ in the duality equation (\ref{dl}) is consistent with this critical self duality, when $\sqrt\mu=\tilde\gamma(\delta) |\frac{\log\log\delta^{-1}}{\log \delta}|$. Finally, the case (ii) trivially yields the self dual equation (\ref{sde}) from definition, when $o(1)\equiv o((\log\delta^{-1})^{-1})$.
\end{proof}

Two interesting consequences of the above classifications are formulated in the following corollaries.

\begin{corollary}
Given a nontrivial asymptotic $x=\delta\cdot\delta^{-v(x)}$, there exists respectively weakly self dual $x_w$ and strictly dual $x_d$ asymptotics given by $x_w=\delta\cdot\delta^{\alpha(\delta)v(x)}$ and $x_d=\delta\cdot\delta^{\mu(\delta)/v(x)}$. The parameters $\alpha$ and $\mu$ respect the constraints of the above theorem.
\end{corollary}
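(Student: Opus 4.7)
My plan is to directly construct the candidates $x_w$ and $x_d$ as written in the statement, and then verify that each satisfies the defining condition of its dual class in Definition \ref{class}, invoking Theorem \ref{charac} to confirm that the freely chosen parameters $\alpha(\delta)$ and $\mu(\delta)$ automatically lie in the admissible range.

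First, any candidate dual asymptotic in $\bf ^*O$ can be represented in the normal form (\ref{renom}) as $\tilde{x}=\delta\cdot\delta^{\tilde v(\tilde x)}(1+o(1))$ with valuation $\tilde v(\tilde x)$, since by proposition \ref{invprop} the valuation is unambiguously read off from the exponent. Thus setting $x_w:=\delta\cdot\delta^{\alpha(\delta)v(x)}$ immediately gives $\tilde v(x_w)=\alpha(\delta)v(x)$, which is precisely condition (ii) of Definition \ref{class} for weak self duality. Similarly, setting $x_d:=\delta\cdot\delta^{\mu(\delta)/v(x)}$ yields $\tilde v(x_d)=\mu(\delta)/v(x)$, so that $\tilde v(x_d)\cdot v(x)=\mu(\delta)$, verifying condition (iv).

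Second, I would check that the multiplicative half of the duality structure (\ref{ds2}), namely $X^-X^+=\lambda\delta^2$, is satisfied with a $\lambda(\delta)$ of the form dictated by Theorem \ref{charac}. For the weakly self-dual candidate, $x\cdot x_w=\delta^{2}\cdot\delta^{(\alpha(\delta)-1)v(x)}$, and matching case (i) of the theorem requires $\delta^{(\alpha-1)v(x)}=\lambda_0(|\log\delta|)^{k}(1+o(1))$. Taking logarithms recovers exactly the universal form (\ref{wsd}) with $\gamma(\delta)=k(\delta)/(\alpha(\delta)-1)$, so the freedom in choosing $\alpha>0$ slowly varying is consistent with the characterization. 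For the strictly dual candidate, $x\cdot x_d=\delta^{2}\cdot\delta^{-v(x)+\mu(\delta)/v(x)}$, and matching case (iii) of the theorem with $\lambda=\lambda_0\delta^{\kappa}$ gives $\kappa(\delta)=-v(x)+\mu(\delta)/v(x)$, which rearranges into the quadratic duality equation (\ref{dl}), $v(x)^2+\kappa(\delta)v(x)-\mu(\delta)=o(1)$.

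The only delicate point, and hence the main (mild) obstacle, is to argue that $\alpha$ and $\mu$ as chosen are indeed slowly varying and non-empty ranges of values for them exist. This, however, is an immediate consequence of the already-established Theorem \ref{charac}: the universal forms (\ref{wsd}) and (\ref{dl}) are shown there to completely characterize the admissible valuations in each dual class, so any $v(x)$ obeying the corresponding universal form determines $\alpha$ (resp.\ $\mu$) up to slowly varying perturbations, and conversely any slowly varying $\alpha>0$ (resp.\ $\mu>0$) gives rise, via the formulas above, to a valid weakly self dual (resp.\ strictly dual) partner. Thus the corollary reduces to a bookkeeping verification against Theorem \ref{charac} once the explicit candidates $x_w$ and $x_d$ are written down in normal form.
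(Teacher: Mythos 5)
Your proposal is correct: writing $x_w$ and $x_d$ in normal form, reading off their valuations to verify conditions (ii) and (iv) of definition \ref{class}, and then matching the product scaling factor $\lambda(\delta)$ against cases (i) and (iii) of theorem \ref{charac} (thereby recovering (\ref{wsd}) and the quadratic relation (\ref{dl})) is exactly the intended reasoning. The paper states this corollary without any proof, treating it as an immediate consequence of the classification and the characterization theorem, so your argument simply supplies the omitted verification along the same lines.
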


\begin{corollary}\label{wsdn}
Weakly self dual asymtotics ${\tilde x}_{w}$ and ${x}_{w}$ must also respect, for consistency, the duality condition (iv) of definition (\ref{class}) so that $v({x}_{w})=\sqrt{\mu/\alpha}$ and $\tilde v({\tilde x}_{w})=\sqrt{\alpha\mu}, \alpha>0$. Further, apart from the  critical self dual limit $\alpha\rightarrow 1$, there exists another class of nontrivial, strictly dual, arithmetical fixed points $\alpha\rightarrow \phi^2$,  where $\phi$ is a  quadratic irrational satisfying $\phi^2 -r\phi -1=o(1)$ for a rational $r=p/q, \ q\neq 0$ where $(p,q)=1$ coprime, and the discriminant $p^2+4q^2$ is not a perfect square. However, $\phi$ can be a more general irrational  when  $r$ is irrational. For a perfect square discriminant,  nontrivial rational fixed points correspond to  rational values of $\phi$.
\end{corollary}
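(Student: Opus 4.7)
The corollary has a purely algebraic first half and an arithmetic fixed-point second half, and I would attack them separately.

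The plan for the first assertion is straightforward: combine condition (ii) of Definition \ref{class}, $\tilde v(\tilde x_w)=\alpha v(x_w)$, with condition (iv), $\tilde v(\tilde x_w)\,v(x_w)=\mu$. Substituting the first into the second yields $\alpha\,v(x_w)^2=\mu$, from which $v(x_w)=\sqrt{\mu/\alpha}$ and then $\tilde v(\tilde x_w)=\alpha\sqrt{\mu/\alpha}=\sqrt{\alpha\mu}$ follow immediately. The only subtle point is that both conditions should hold simultaneously modulo the $o(1)$ remainders permitted by Definition \ref{class}, which is safe because the product and the ratio of two slowly varying parameters remain slowly varying.

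For the second assertion I would start from the duality equation (\ref{dl}) of Theorem \ref{charac}, $v(x)^2+\kappa(\delta)v(x)-\mu(\delta)=o(1)$, and ask when a weakly self dual asymptotic with $v(x_w)=\sqrt{\mu/\alpha}$ is simultaneously a solution of this strict duality quadratic. Introducing the normalized variable $\phi=\sqrt{\alpha}$ (equivalently $\phi=v/\sqrt{\mu}\cdot(\text{constant})$) and $r=\kappa/\sqrt{\mu}$ (with appropriate sign conventions absorbed into $\kappa$), the quadratic (\ref{dl}) rescales to the arithmetical fixed-point equation $\phi^2-r\phi-1=o(1)$. The self dual limit is recovered by sending $r\to 0$ (equivalently $\kappa\to 0$, $\mu\to\tilde\gamma^2(\log\log\delta^{-1}/\log\delta)^2$ as in the critical case), which forces $\phi^2\to 1$ and hence $\alpha\to 1$, corresponding to condition (i) or the critical limit (iii). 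Any other solution is a genuinely strictly dual fixed point $\alpha\to\phi^2$.

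The arithmetic content is then elementary. For rational $r=p/q$ with $(p,q)=1$ and $q\neq 0$, the quadratic has solutions $\phi=(p\pm\sqrt{p^2+4q^2})/(2q)$, so the discriminant $p^2+4q^2$ controls rationality: $\phi$ is a quadratic irrational precisely when $p^2+4q^2$ is not a perfect square, and is rational otherwise. If $r$ is itself irrational, $\phi=(r+\sqrt{r^2+4})/2$ need not be quadratic and indeed can be a general irrational, as claimed. I would close by noting that, in line with Definition \ref{class}, all equalities are understood up to the slowly varying/$o(1)$ tolerances, so the "fixed point" is an asymptotic fixed point for the duality-induced rescaling.

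The main obstacle is step two, matching the parameters: one must verify that the $\kappa$ and $\mu$ appearing in the strict duality quadratic (\ref{dl}) are the same slowly varying quantities produced by the weak self-dual relation $v(x_w)=\sqrt{\mu/\alpha}$, and that the sign convention absorbed into $\kappa$ is compatible with the form $\phi^2-r\phi-1$ rather than $\phi^2+r\phi-1$. Once that identification is done, the quadratic and its Diophantine analysis is routine; the rest is bookkeeping of slowly varying factors.
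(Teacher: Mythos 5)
Your proposal is correct and follows essentially the same route as the paper: the first claim by combining conditions (ii) and (iv) of Definition \ref{class} to get $\alpha v(x_w)^2=\mu$, and the second by imposing the quadratic duality equation (\ref{dl}) on the weakly self dual valuation with the rescaling $\kappa=r\sqrt{\mu}$, $\phi=\sqrt{\alpha}$, followed by the elementary discriminant analysis of $\phi^2-r\phi-1=o(1)$. The sign worry you flag resolves itself: substituting $v=\sqrt{\mu}/\phi$ into $v^2+\kappa v-\mu=o(1)$ and multiplying by $\phi^2/\mu$ yields $\phi^2-r\phi-1=o(1)$ directly, exactly as the paper asserts, with the self dual (critical) limit at $r=0$.
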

\begin{definition}
 Strictly dual  arithmetical fixed points of Corollary \ref{wsdn} are called weakly dual arithmetical asymptotics. The  rational fixed points, in particular, lead to a class of weakly dual arithmatical asymptotics, distinct from  those close to the critical self dual limit and are known as weakly dual rational asymptotics.
\end{definition}
\begin{proof}
The first statement follows directly from definitions of weakly self dual and strict duality conditions of definition (\ref{class}). The second property follows from the {\em unique} scaling invariance of the quadratic duality equation (\ref{dl}), that should hold for  weak (self) duality: $\kappa(\delta)=r\sqrt{\mu(\delta)}, \ r=p/q$,  so that $\alpha\rightarrow \phi^2$ and $\phi^2 - r\phi -1=o(1)$. The positive root is a quadratic irrational if and only if $p^2+4q^2\neq m^2,$ for an integer $m$. The critical self dual limit corresponds to $r=0$.
\end{proof}

\begin{example}
Here, we give an example of a pair of weakly  dual conjugates  where  self dual limit $\alpha\rightarrow 1$  fails: $x_w=a_1\delta\cdot(|\log\delta|)^{-|\frac{k}{\alpha-1}|}$ and ${\tilde x}_w=a_2\delta\cdot(|\log\delta|)^{|\frac{k\alpha}{\alpha -1}|}$ for positive constants $a_1,\ a_2, \ |k|\neq 0$, so that $v(x_w)=|\frac{k}{\alpha-1}||\frac{\log\log\delta^{-1}}{\log\delta}|$ and $v(\tilde x_w)=|\frac{k\alpha}{\alpha-1}||\frac{\log\log\delta^{-1}}{\log\delta}|$, for $\alpha>0 $. 

For an explicit example of a weakly  dual rational asymptotic, consider asymptotics with valuations $v(x_w)=\sqrt\mu/2$ and $\tilde v(\tilde x_w)=2\sqrt\mu$, then $r=3/2$ and hence $\phi=2 \ {\rm or} \ -1/2$ and $\alpha=4 \ {or} \ 1/4$. In fact, for each rational $\phi_r$ and its negative inverse $-1/\phi_r$,  there exists a special value of the rational $r$, so that the pair ($\phi_r$, $-1/\phi_r$)  constitutes two  rational solutions of the equation $\phi^2-r\phi-1=0$. Such weakly  dual asymptotics, away from self dual limit and also distinct from weakly self dual asymptotics, are generally intermediate to the critical self dual  and weakly dual asymptotics, and will be studied separately. 
\end{example}

\begin{remark}\label{self}
{\rm 
1. The family of self dual asymptotics is ultrametrically trivial, and have unique self dual valuation $v_s(x)$ for any self dual $x_s$.

2. The unfolding of self dual to (nontrivial) weakly  dual, rational asymptotics and then finally to more general weakly dual (irrational) asymptotics is an interesting phenomenon. More in depth analysis in the context of dynamical systems and number theory would be considered separately.

3. The weakly  self dual asymptotics can further be reclassified according to slower logarithmic variations of $\lambda(\delta)$ of the form
\begin{equation}
\lambda(\delta) = \lambda_0 (\log^n \delta)^{-k} (1+o(1))
\end{equation}
where $\log^n (\cdot)= \log\log\ldots { n \ times} \ (\cdot)$, and respecting analogous weakly self dual and arithmatically dual fixed points respectively. 
}
\end{remark}

Next, for a  self dual (critically or not) asymptotic $x\in {\bf ^*O}$, the valuation $v(\tilde x)=v(x)$ is a  constant, independent  of any $\tilde x\in (-\delta,\delta)$, but is, in fact, a scale invariant smooth function of the scaling parameter $\xi=\log x/\delta<1, \ \delta=\delta(x)$, satisfying the invariance properties of the propositions \ref{invariance} and \ref{ultram}.

\begin{proposition}[{\bf Self dual $\Rightarrow$ Smoothness}] \label{sd}
The discrete ultrametric property, proposition \ref{ultram}, of the valuation implies that the self dual valuation maps the extended null set ${\bf ^*O}\subset {\bf ^*R}$ into ${\cal O}\subset \cal R$  as a connected segment of a smooth curve equipped with the smooth variable $v(\xi)\propto \xi^{i}, \ 0<\xi=\log x/\delta<<1$, $i$ being a non-negative integer.
\end{proposition}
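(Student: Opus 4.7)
The plan is to combine self-duality with the invariance properties of proposition \ref{invariance} and the discrete group structure of the ultrametric established in proposition \ref{ultram}, organizing the argument in three steps. First, I would exploit the self-dual condition $\tilde v(\tilde x)=v(x)$ to collapse each conjugate pair $(\tilde x,x)$ onto a single valuation, so that $v$ no longer depends on the individual asymptotic variables. Combined with the scale invariance $v(kx)=v(x)$ and scale-reparametrization invariance $v_{k\delta}(x)=v_{\delta}(x)$ of proposition \ref{invariance}, this descends $v$ to a well-defined function $v(\xi)$ of the single scale-invariant parameter $\xi=\log x/\delta$, free of any surviving ambiguity up to the $o(1)$ remainders of remark \ref{not}.

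Second, I would establish connectedness of ${\cal O}=v({\bf ^*O})$. As $x$ ranges over the right neighbourhood $(\delta,\delta^{*})\subset{\bf ^*O}$, the rescaled variable $\xi$ varies continuously in a small positive interval, and continuity of $v(\xi)$ (inherited from its definition as a renormalized logarithm) makes $\cal O$ the continuous image of a connected set, hence a connected arc in $\cal R$. This takes care of the ``connected segment of a smooth curve'' part of the statement, modulo the smoothness itself which is to be tied down in the next step.

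Third, to extract the precise power-law form, I would appeal to the proof of proposition \ref{ultram}: the value group is isomorphic to $(\mathbb{Z},+)$ with exponential representation $v(\tilde x)=\delta^{a(\tilde x)}$ for an integer-valued exponent $a$. In the self-dual case this exponent must vary smoothly with $\xi$ while remaining compatible with the scale-homogeneity and the translation invariance forced by proposition \ref{invariance}. The only smooth, scale-homogeneous, integer-indexed solutions of these combined constraints are monomials $v(\xi)\propto\xi^{i}$, and non-negativity of $i$ follows from the a priori bound $0<v<1$ noted after proposition \ref{comp}, which rules out divergence as $\xi\to 0^{+}$.

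The main obstacle I foresee is step three: reconciling the discretely-valued integer structure of the ultrametric with the smooth continuous dependence on $\xi$ that self-duality forces. A generic discretely-valued function of a continuous parameter would have to be locally constant, so some care is needed to argue that here the discreteness constrains the \emph{exponent} rather than the values themselves, leaving exactly the one-integer-parameter family $v(\xi)\propto\xi^{i}$ of smooth parameterizations claimed in the statement. The other steps are essentially bookkeeping once the invariance properties and the group-theoretic output of proposition \ref{ultram} are in hand.
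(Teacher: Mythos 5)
Your steps 1 and 2 track the paper's own proof closely: the paper likewise uses the uniqueness of the self-dual valuation (remark \ref{self}(1)) together with the invariance properties of proposition \ref{invprop} to reduce everything to the single renormalized variable $v(\xi)$, and it obtains the connected segment $(-v(\xi),v(\xi))$ from proposition \ref{ultram}. The genuine gap is your step 3, and you have in fact named it yourself. You propose to derive $v(\xi)\propto\xi^{i}$ from the $\mathbb{Z}$-valued exponent structure $v(\tilde x)=\delta^{a(\tilde x)}$ of the discrete valuation, but you give no argument for the decisive claim that ``the only smooth, scale-homogeneous, integer-indexed solutions are monomials,'' and the tension you flag --- a discretely valued function of a continuously varying point must be locally constant --- is not resolved by the mechanism you sketch. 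In the paper this tension does not arise in your form, because $v$ is not treated as a nonconstant function on ${\bf ^*O}$ at a fixed scale: for fixed $x$ and $\delta$ the self-dual valuation is a single value (a trivial ultrametric, independent of $\tilde x\in(-\delta,\delta)$), and the variability of $v$ enters only through the variation of the asymptotic pair $(x,\delta)$ via $\xi=\log x/\delta$.

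What pins down the power-law form in the paper is a different, more elementary closing observation: the self-dual asymptotics have the normal form $X^{\pm}=\delta\cdot\delta^{\pm v(\xi)}$, with $v(\delta)=0$ and $v(\delta^{*})=v(\xi)$, and $\delta^{*}$ can be related to a general $X^{+}$ only linearly, $\delta^{*}=kX^{+}$ with $k>0$ constant; combined with the scale invariance $v(kx)=v(x)$ of proposition \ref{invprop} and the uniqueness of the self-dual variable, the smooth parametrization extending to $v(0)=0$ is then taken, ``by default,'' to be $v(\xi)\propto\xi^{i}$ with $i$ a non-negative integer. (Your appeal to the bound $0<v<1$ to exclude negative $i$ is a reasonable substitute for the paper's smooth extension to $v(0)=0$.) Admittedly the paper's own justification of the monomial form is more an assertion than a complete derivation, but it is a different closing argument from the one you outline, and as written your step 3 does not go through: you would still have to supply the argument that the discreteness constrains the exponent rather than the values --- precisely the point you leave open.
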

\begin{proof}
Self dual valuation $v(\xi), \ \xi=\log x/\delta<1$ is a trivial ultrametric, being independent of $\tilde x\in (-\delta,\delta)\subset (\delta^*,\delta^*)$, and hence $\bf ^*O$ is totally disconnected. On the other hand, for a fixed asymptotic $x\in \bf R$ and the corresponding scale $\delta$,  the valuation $v$ maps the extended set generated by the renormalized self dual asymptotics $X^\pm=\delta\cdot \delta^{\pm v(\xi)}\in {\bf ^*O}$ into a connected {\em smooth} curve segment in the interval defined by $(-v(\xi),v(\xi))\subset \cal R$ (by proposition (\ref{ultram}) and Remark 3), equipped with the unique renormalized  variable $v(\xi)$ (c.f. remark \ref{self}(1)) that has, at the most, the default form  $v(\xi)\propto \xi^i, \ i$ being a nonnegative integer, and so extends {\em smoothly} to $v(0)=0$. Notice that $v(\delta)=0$ and $v(\delta^*)=v(\xi)$, since $\delta$ and a general $X^+$ can relate only linearly, i.e. $ \delta^*=kX^+, \ k>0$, a constant parameter (by proposition \ref{invprop}). Further, the variability of $v$ follows from that of $x$ and $\delta$ by $\xi=\log x/\delta$.
\end{proof}

\begin{corollary}\label{pws}
Weakly self-dual asymptotics $x$ and $\tilde x$, around a critical  self dual asymptotic, can proliferate to a set of, at most, a finitely many nontrivial rational fixed points,  and induce a pair of piece-wise smooth scaling variables $v(\xi)$ and $\tilde v(\xi)$ to parametrize the connected set $\cal O$. Further, $\tilde v(\xi)=\alpha(\delta)v(\xi)$.
\end{corollary}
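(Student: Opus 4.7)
The plan is to combine the characterization of weakly self dual valuations given in Theorem \ref{charac} together with the arithmetical fixed point classification of Corollary \ref{wsdn}, and then lift the smoothness argument of Proposition \ref{sd} to each fixed point branch. First I would invoke Definition \ref{class}(ii) together with consistency condition (iv) of the same definition, as already articulated in Corollary \ref{wsdn}, to write $v(x_w)=\sqrt{\mu/\alpha}$ and $\tilde v(\tilde x_w)=\sqrt{\alpha\mu}$. This displays the relation $\tilde v=\alpha v$ explicitly and establishes the third assertion of the corollary at once (with $\alpha$ a slowly varying $\delta$-dependent parameter).

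Next, to bound the set of admissible $\alpha$ values around the purely self dual limit $\alpha=1$, I would restrict attention to the rational fixed points of the quadratic duality equation $\phi^{2}-r\phi-1=o(1)$. Corollary \ref{wsdn} tells us that these correspond to $r=p/q$, $(p,q)=1$, with $p^{2}+4q^{2}$ a perfect square, so $\alpha=\phi^{2}$ is rational. Because the valuation $v$ takes values in a discretely valued (exponential) ultrametric group -- established in the proof of Proposition \ref{ultram} and isomorphic to the integers -- the admissible $\alpha$-values around the self dual fixed point $\alpha=1$ inherit this discreteness. A bounded neighbourhood of $\alpha=1$ in this discrete topology can contain only finitely many such rational fixed points; this is the finiteness claim.

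For piece-wise smoothness, I would apply Proposition \ref{sd} branch by branch. Fix one of the finitely many rational fixed points $\alpha_{i}=\phi_{i}^{2}$. On this branch the relation $\tilde v=\alpha_{i}v$ is constant, so the pair $(v,\tilde v)$ behaves in $\bf ^{*}O$ exactly as a self dual pair behaves in Proposition \ref{sd}; hence each branch maps into a connected smooth segment of $\cal O$ parametrized by $v(\xi)\propto\xi^{j}$ for some non-negative integer $j$, with $\tilde v(\xi)=\alpha_{i}v(\xi)$. Concatenating the finitely many such smooth segments as $\alpha$ ranges over the rational fixed points around $\alpha=1$ yields a connected curve that is smooth on each piece but may fail to be $C^{1}$ across the junctions, i.e. piece-wise smooth in the desired sense.

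The main obstacle in this plan is the finiteness step: rigorously extracting the discreteness of admissible $\alpha$ from the ultrametric structure and from the slow variation constraints on $\alpha(\delta),\mu(\delta),\lambda(\delta)$ in Theorem \ref{charac}. One must argue that, although rationals are dense in $\bf R$, the rationals compatible with the duality equation \eqref{dl}, the valuation form \eqref{wsd}, and the ultrametric discrete value group of Proposition \ref{ultram} only admit finitely many solutions within a fixed ultrametric ball about $\alpha=1$. The remaining pieces -- relating $\alpha_{i}$ to the quadratic $\phi_{i}$, verifying smoothness on a single branch via Proposition \ref{sd}, and writing down $\tilde v=\alpha v$ -- are routine once this finiteness is secured.
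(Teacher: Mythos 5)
Your derivation of the relation $\tilde v(\xi)=\alpha(\delta)v(\xi)$ from Definition \ref{class} and Corollary \ref{wsdn}, and your branch-wise smoothness argument, are in line with the paper (the paper phrases the smoothness step slightly differently, deducing piece-wise smooth components $v_i(\xi)$ from the ultrametric facts that $\bf ^*O$ decomposes into disjoint clopen balls and that $v$ is continuous on it, but your appeal to Proposition \ref{sd} on each fixed-point branch amounts to the same thing). The genuine gap is exactly where you located it: the finiteness step. Your proposed mechanism --- that the admissible $\alpha$'s inherit discreteness from the discretely valued ultrametric group of Proposition \ref{ultram} --- does not work as stated. The discrete value group there refers to the exponential valuation $\delta^{a(\cdot)}$ with integer exponents, not to the set of renormalized values $v(\xi)$ or of ratios $\alpha=\tilde v/v$; indeed the whole point of Proposition \ref{dual} is that the value set of $v$ \emph{can} be dense (that is how the devil's staircase arises in the strictly dual case), so discreteness of the value group cannot by itself bound the rational fixed points in a neighbourhood of $\alpha=1$. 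Moreover, the paper's own example notes that \emph{every} rational $\phi_r$ solves $\phi^2-r\phi-1=0$ for a suitable rational $r$, so without a further constraint the candidate set of rational fixed points is a priori dense.

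The paper closes this gap with a different idea, which your plan is missing: a contradiction argument using the completeness of $\bf ^*O$. If the rational fixed points $\phi_i$ (equivalently the values $v(x_i)=\sqrt{\mu_i}/\phi_i$, $\tilde v(\tilde x_i)=\phi_i\sqrt{\mu_i}$) were dense, then by completeness the value set of $v$ would contain their limit points, which are \emph{irrational} fixed points of the quadratic duality equation; by Corollary \ref{wsdn} these correspond to weakly dual arithmetical (i.e.\ strictly dual) asymptotics, contradicting the hypothesis that one stays in the weakly self dual class around a purely self dual asymptotic. Hence the rational fixed points are non-dense, and the paper concludes from this that only finitely many nontrivial limit points (branches) occur, after which the clopen-ball decomposition yields the piece-wise smooth parametrization. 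So your outline is recoverable, but the finiteness must be argued via this density/completeness dichotomy between weakly self dual (rational, perfect-square discriminant) and strictly dual (irrational) fixed points, not via discreteness of the ultrametric value group.
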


\begin{proof}
Let $x$ and $\tilde x$ be a self dual pair so that $v(x)=v(\tilde x)\equiv v(\xi)$. Denote the associated {\em rational }, weakly  dual asymptotics by $x_i$ and $\tilde x_i$, so that $\tilde v(\tilde x_i)= \alpha_i(\delta)v(x_i)$ together with constraints $v(x_i)=\sqrt\mu_i/\phi_i$ and $\tilde v(\tilde x_i)=\phi_i\sqrt\mu_i$, where $\phi_i$ is a rational fixed point  satisfying $\phi^2-r_i\phi-1=0, \ r_i=p_i/q_i, \ q_i\neq 0$ (by corollary (\ref{wsdn})). Further, $p_i^2+4q_i^2=m_i^2$, for an integer $m_i$. Set of such rational fixed points {\em can not be dense}, since otherwise, by the completeness of $\bf ^*O$,  the value set of $v$ would contain irrational fixed points, corresponding to weakly dual irrational asymptotics (to be considered separately in proposition (\ref{dual})).

 Introduce notations $v(x_i):=v_i(\xi)$ and $v(\tilde x_i):=\tilde v_i(\xi)=\alpha_i(\delta)v_i(\xi)$. The non-dense value set of $v$   admits now a finitely many  nontrivial  limit points, depending on $r_i$, other than the one corresponding to the self dual limit $\alpha(\delta)\rightarrow 1$, and $\ r_i=0$. Because of ultrametric property, (viz,  (i) $\bf {^*O}$ is  a countable union of disjoint clopen balls and hence disconnected, and (ii) $v$ continuous on $\bf {^*O}$), the map $v:{\bf ^*O}\mapsto \cal O$ could, at most,  be  composed of piece-wise continuous (and may very well be affine) pieces, $v_i(\xi)$, that is smooth on the $i$th component ball. 
\end{proof}

\begin{example}[\bf Self dual] {\rm 
i) The dominant asymptotic of the prime counting function, $\Pi(n)\sim \frac{n}{\log n}(1+o(1))$, given by the prime number theorem, constitutes an example of (critical) self dual asymptotic with self dual  valuation $v(\Pi)=\frac{\log\log n}{\log n} +o(1)$. The $n$th prime $P(n)\sim n\log n(1+o(1))$ constitutes the self dual conjugate with identical valuation. The possible role of weakly dual arithmetical asymptotics in PNT will be investigated separately.

ii) Self dual asymptotics should also arise naturally in nonlinear differential equations with nontrivial limit sets. For instance, the stable limit cycle of the Van der Pol oscillator is an asymptotic state that is reached only in late time $t\rightarrow \infty$. The self dual valuation $v(\tau)\propto \tau, \ \tau=\epsilon t\sim O(1)$, ($\epsilon<1$ being the nonlinearity parameter), then offers a natural scaling variable to parametrize the  asymptotic cycle trajectory \cite{dpp}, that is not generally available in the standard perturbative analysis of the problem .
Analogous applications of weakly self dual valuations is expected to have significance in period doubling bifurcation in an externally driven oscillator.}
\end{example}

\begin{remark}\label{hm}
{\rm 
Renormalization mapping $x\mapsto v(\xi)$ for $x\rightarrow 0$ extends the vanishing linear neighbourhood $\{(-\delta,\delta)|\delta\rightarrow 0\}$ into a nontrivial, finite interval $(0,v(\xi))$ (or $(-v(\xi),v(\xi))$) in which points are parametrized by the natural renormalized variable $v(\xi)$. For self dual asymptotics, $v(\xi)$ is a smooth variable, for weakly  dual case it is piece-wise smooth, but for strictly dual asymptotics, $v(\xi)$ would in general be nonsmooth (or singular) and parametrizes a fractal curve in the asymptotic neighbourhood of 0 (say). As it would transpire, the function $v(\xi)$ would have the structure of a Lebesgue-Cantor staircase type function $S^s_F$ that is defined from the Hausdorff measure or, equivalently, from the integral mass function, defined in \cite{parv1,parv2},
\begin{equation}\label{isf}
S_F^s(\xi)=\begin{cases}
\gamma^s(F,p_0,\xi), & \xi\geq p_0 \\
-\gamma^s(F,\xi,p_0), & \xi < p_0
\end{cases}
\end{equation}
corresponding to the integral mass function, either of zero dimensional fractal (Cantor) set or of a parametrized fractal curve $F$: $\gamma^s(F,a,b)=\underset{\delta\rightarrow 0}\lim{\ \inf \sum_{k=0}^{n-1} | F(\xi_k)-F(\xi_{k+1}) |^s }$, where the infimum is over all the $\delta$ partitions defined by the $n$th level subdivision of the closed interval $[0,\xi]\subset [a,b]$, and $F(\xi)=(\xi, f(\xi))$ is the graph of the parametrized curve. For a set with finite Hausdorff $s$-measure, one has ${\cal H}^s(F\cap [0,\xi])\propto \gamma^s(F,0,\xi)$. For a Cantor set $F$, one sets $f(\xi)=\xi^s, \ 0<s<1$.

In the following, we distinguish two fractal sets: (i) one denoted by  $F$ is a (fictitious) fractal set that must exist at the background and act as a sort of {\em seed}, for {\em a} specific choice of the duality structure and the corresponding valuation, but remains nascent at the present description, and (ii) the set denoted by $\Gamma_F$ that represents a  continuous fractal curve and one that is parametrized by the renormalized valuation $v(\xi)$. This fractal curve $\Gamma_F$ has the structure of a Lebesgue-Cantor type non-decreasing, singular or no-where differentiable curve and is generated by the Hausdorff measure of the seed fractal set $F$. In the context of the analysis on a fractal set, $\Gamma_F$ stands for the Cantor's singular staircase curve when the underlying set $F$ is a Cantor set \cite{dpr1, parv1}. For a Koch type set $F$, on the other hand, the curve $\Gamma_F$ denotes the graph of the corresponding Lebesgue Cantor type rise function \cite{parv2}.
}
\end{remark}

\begin{proposition}[{\bf Duality $\Rightarrow$ Fractality}] \label{dual}
A strictly dual valuation maps the extended null set ${\bf ^*O}\subset {\bf ^*R}$ into a segment of a fractal curve $\Gamma_F$ parametrized by a renormalized variable having the form of the associated devil's staircase function or, equivalently, the $s$-Hausdorff measure, $v(\xi)\propto {\cal H}^s(F\cap (0,\xi)), \ 0<\xi<<1$, where $s$ is the Hausdorff dimension of the seed fractal $F$. The deformed real line $\cal R$ is locally identical to the induced fractal curve $\Gamma_F$.
\end{proposition}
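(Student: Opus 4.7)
The plan is to leverage the strict-dual quadratic equation (\ref{dl}) of Theorem \ref{charac} together with the total-disconnectedness of ${\bf ^*O}$ (proposition \ref{comp}) to show that the attained valuation values form a Cantor-like set $F$, and then to identify the monotonic continuous parametrization of $\cal O$ by $v$ with the Lebesgue--Cantor integral mass function $S_F^s$ of (\ref{isf}).

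First I would note that, in contrast to the self-dual case (proposition \ref{sd}) where $v$ is trivial (constant in the neighbourhood) and forces a smooth segment, strict duality $\tilde v(\tilde x)\, v(x)=\mu(\delta)$ with $\mu(\delta)=\lambda_0\delta^{\kappa}(1+o(1))$ generates a genuine, slowly varying, two-parameter family of admissible valuations as $(\tilde x,x)$ range over dual pairs in ${\bf ^*O}$. Consequently the value set $F:=v({\bf ^*O})\subset(0,1)$ is nontrivial. Using the exponential ultrametric representation $v(x)=\delta^{a(x)}$ established in the proof of proposition \ref{ultram} (whose value group is isomorphic to the additive integers) together with the compact totally disconnected structure of ${\bf ^*O}$, I would argue that $F$ inherits the clopen-ball decomposition of ${\bf ^*O}$ and is therefore a compact totally disconnected set in $\bf R^+$. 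Its self-similarity follows from the scale-invariance property (2) of proposition \ref{invprop}: rescaling $\delta\mapsto k\delta$ reproduces the same duality equation and therefore the same geometric pattern at the rescaled level, making $F$ a self-similar fractal of some Hausdorff dimension $s\in (0,1)$ (the seed fractal of remark \ref{hm}).

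Second, I would invoke proposition \ref{ultram} to conclude that the image ${\cal O}=v({\bf ^*O})$ is a \emph{connected} segment in $\cal R$. Combining this with the fact that the domain ${\bf ^*O}$ is totally disconnected and $v$ is continuous and nondecreasing (in the order relation of Section 2.1), the standard argument for Cantor-type singular functions applies: $v$ must be constant on each bounded clopen ball of ${\bf ^*O}\setminus F$ and strictly increasing only on $F$. This is precisely the definition of the devil's-staircase function $S_F^s$, which by the construction of (\ref{isf}) equals the $s$-Hausdorff measure $\mathcal{H}^s(F\cap(0,\xi))$ up to a multiplicative constant. Hence $v(\xi)\propto S_F^s(\xi)\propto \mathcal{H}^s(F\cap(0,\xi))$, and the image $\Gamma_F\subset \cal R$ is the graph of this Lebesgue--Cantor staircase, a continuous fractal curve parametrized by the scaling variable $\xi=\log x/\delta$.

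Finally, the local identification ${\cal R}\cong \Gamma_F$ follows from definition \ref{defrmd}: every point of $\bf R$ is deformed via the translation-invariant asymptotic extension into ${\bf 0}+r$, on which the same valuation $v$ acts, so the induced fractal curve $\Gamma_F$ recurs in every asymptotic neighbourhood of $\cal R$. The main obstacle I anticipate is the rigorous link between the purely algebraic quadratic duality relation (\ref{dl}) and the measure-theoretic self-similarity of a \emph{specific} seed fractal $F$ with a definite Hausdorff dimension $s$; this requires matching the scaling exponent $\kappa(\delta)$ of $\mu(\delta)$ (and its slowly varying behaviour in $\delta$) with the contraction ratios of an iterated function system generating $F$. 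I would handle this by choosing $F$ to be the self-similar attractor whose contraction ratios are dictated by $\delta^{\kappa(\delta)/2}$ in the scaling limit, and then verifying \emph{a posteriori} that the renormalized valuation reproduces $\mathcal{H}^s(F\cap(0,\xi))$, with $s$ determined by the Moran equation associated to that IFS.
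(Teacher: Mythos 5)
There is a genuine gap, and it sits at the heart of the argument. You define $F:=v({\bf ^*O})$ (the \emph{value set}) and claim it is compact and totally disconnected because it ``inherits'' the clopen-ball decomposition of ${\bf ^*O}$; one paragraph later you invoke proposition \ref{ultram} to say that the very same image ${\cal O}=v({\bf ^*O})$ is a \emph{connected} segment. These two claims contradict each other, and the first is unsupported: continuity of the ultrametric norm is exactly what makes the image connected, while total disconnectedness lives in the domain ${\bf ^*O}$. In the paper's proof (and in formula (\ref{vh})) the seed fractal $F$ is not the value set at all; it is a set in the $\xi$-parametrized domain, reconstructed from $\delta$-covers $J^i$ whose projections onto $(\delta,x)$ are the clopen covers $I^i$, and $v(\xi)={\cal H}^s(F\cap(0,\xi))$ only makes sense with $F$ living in the domain where $\xi$ varies. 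Your later phrase ``constant on each bounded clopen ball of ${\bf ^*O}\setminus F$'' tacitly switches to this domain picture, so as written the object $F$ is used in two incompatible ways and the intended devil's-staircase structure is never actually pinned down.

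The second, related gap is the step you yourself flag as the ``main obstacle'': connecting the algebraic duality relation (\ref{dl}) to a specific self-similar set with a definite dimension $s$. The paper does not derive $F$ intrinsically from (\ref{dl}); it proceeds constructively: the duality structure (\ref{ds2}) supplies countable sequences $\tilde x_i$, $x_i$ on either side of $\delta$, discrete ultrametricity lets one assign constant values $v^i$ on the clopen covers $I^i$, a \emph{judicious choice} of the $\tilde v^i$ makes the $v^i$ dense and monotone increasing, these are parametrized as $v^i(\xi)=\xi^{\alpha^i(\xi)}$ as in (\ref{vmap}), and finally the identification $v^i=k|J^i|^s$ with covers of a seed set $F$ is \emph{arranged}, after which monotonicity lets one invert the measure to obtain (\ref{vh}). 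Your substitute route --- deducing self-similarity of the value set from the scale-reparametrization invariance $v_{k\delta}(x)=v_\delta(x)$ of proposition \ref{invprop}(2), and then matching contraction ratios $\delta^{\kappa(\delta)/2}$ to a Moran equation --- is not carried out, and the invariance property by itself does not yield geometric self-similarity of any set of attained values. So the crucial link $v(\xi)\propto{\cal H}^s(F\cap(0,\xi))$ is asserted rather than established; to repair the argument you would need to relocate $F$ to the domain, exhibit the dense monotone family of locally constant values furnished by the duality structure, and then either perform the paper's cover-by-cover identification or genuinely prove the IFS/Moran matching you sketch.
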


\begin{remark}
{\rm 
 Here we assume that the emergent fractal curve admits finite $s$-Hausdorff measure. In a more general situation, when Hausdorff measure is non-finite, i.e., either 0 or $\infty$, one should consider  generalized Hausdorff $h$-measures corresponding to a gauge function $h(x)$. 
}
\end{remark}

\begin{proof}
For a strictly dual asymptotic $x\geq \delta$, there exists $\tilde x, \ {\rm and \ a \ countable  \ sequence} \ \tilde x_i$, $\ i=1,2\ldots$ satisfying $0<\ldots<\tilde x_2<\tilde x_1<\tilde x<\delta\leq x$. The duality transformations satisfying the structure equation (\ref{ds2}) now generate another countable  sequence of asymptotics $x_i$ satisfying $\delta\leq \ldots<x_2<x_1<x$ so that the duality structure $\tilde x_ix_i=\lambda_i\delta^2$ and $\tilde v(\tilde x_i)v(x_i)=\mu_i$ is respected.  Consider countable, disjoint (clopen) covers $\tilde I^i$ and $I^i$ respectively for $(0,\delta)$ and $(\delta,x)$ such that $ \tilde I^i=(\tilde x_{i+1}, \tilde x_{i}]$ and $ I^i=(x_{i+1},  x_{i}]$. Notice that $\tilde v(\tilde x_2)>\tilde v(\tilde x_1) \ \Rightarrow \ v(x_2)<v(x_1)$.

Exploiting discrete ultrametricity, one assigns constant values to the renormalized valuations $\tilde v^i:=\tilde v^i(\tilde x_i)=\tilde v(\tilde I^i)$ and $v^i:=v^i(x_i)=v(I^i)$ which span the respective intervals $(0,\delta)$  and $(\delta,x)$ {\em continuously} (continuity follows from $v$ being a(n) (ulta)metric).  By a {\em judicious choice} of valuations $\tilde v^i$ for the cover $\tilde I^i$ and because of the duality structure, one can {\em always} make the sequence $v^i$ over the right hand covering sets $I^i$ {\em dense} and monotone increasing in $\bf R$.

Next,  these continuously spread constant, and densely defined, and increasing ultrametric values $v^i$ can be parametrized  by the scaling variable $\xi=\log x/\delta<1$, for a fixed choice of $x$ and $\delta$, and living in the interval  $(0,\xi)\subset \bf R$ ($\xi$ is a constant parameter relative to the covering intervals $I^i$ ), so that 
\begin{equation}\label{vmap}
v^i\mapsto v^i(\xi)=\xi^{{\alpha^i(\xi)}}, \ \alpha^i(\xi) <1 
\end{equation}
for each $i$, now define continuous mappings from $I^i\subset{\bf ^*R}$ to $\bf R$. The power law representation of the final equality is again dictated by the property of discrete valuation. Because of density of the sequence $v_i$, the limit map $v: (0,\xi)\mapsto \cal {R}$ defined by the sequence $v^i(\xi)$, as $i\rightarrow \infty$, is a continuous, Lebesgue-Cantor staircase type increasing function \cite{rudin, parv1, dpr1,dpr2,dss}, denoted above  by $S^s_{F}$ (remark (\ref{hm})) , corresponding to the asymptotically induced (seed) fractal set $F$, in the deformed neighborhood $\cal O$. Because of admissible choices in defining the maps $v^i(\xi)$, the induced continuous function $S^s_{F}$:= $v(\xi)=\xi^{{\alpha(\xi)}}$ can generally be increasing, differentiable almost everywhere, or  nowhere in $(0,\xi)\subset \bf R$, depending upon the particular application one is concerned with.
On the other hand, the mapping $\tilde v: \ {\bf ^*R}\ni (0, \delta) \mapsto  (\xi,\infty)  $ defined by the sequence $\tilde v^i(\xi)$, is also a continuous function, satisfying the duality condition $\tilde v(\xi)=\mu/v(\xi)=\mu\xi^{-{\alpha(\xi)}}$.

To justify the above assertion, one notes  that the renormalized valuation $v$, as represented above over the $\delta-$covers $I^i$ by a countable, densely defined parametrized values $v^i(\xi)$,  can always be arranged suitably to generate a {\em nontrivial}, and generally, {nonconstant, atomless measure} to the  $\delta-$covers  $\{J^i\}$  of the induced seed set $F$ in the form $v^i=k|J^i|^s, \ k>0$, so that projection of $J^i$ on  $(\delta,x)$ is $I^i$, and $s$ is the  Hausdorff dimension of the fractal set $F$ concerned. 

As a consequence, the finite, $s-$ dimensional Hausdorff measure of $F$ is now given by ${\cal H}^s(F)=\inf \sum|J^i|^s\propto\inf \sum |v^i(\xi)|, \ s>0$, in the limit $\delta\rightarrow 0$, when infimum is taken over all possible such $\delta$ covers. Because of monotonicity, this measure can now be inverted to define the  increasing continuous function by 
\begin{equation}\label{vh}
v(\xi)=\begin{cases}
{\cal H}^s(F\cap (0,\xi)),  \\
-{\cal H}^s(F\cap (-\xi,0))  
\end{cases}
\end{equation}
\end{proof}

\begin{remark}\label{dual1}
{\rm 
(i) For sufficiently small $v(\xi)<<1$, quadratic duality equation (\ref{dual}) implies that $v(\xi)\approx \mu/\kappa$ (neglecting $v^2$ compared to linear terms), so that $\tilde v(\tilde\xi)\approx \kappa$. For $v(\xi)\propto {\cal H}^s(F\cap \cal O)$, and $\mu$ a small constant, we get $v(\tilde\xi)\approx 1/{\cal H}^s(F\cap \cal O)$. 

In example (\ref{exdl}(ii)), the valuation $v_n(A_i^+)=i_n2^{-m_n}$ corresponds to the continuously distributed,  {\em dense} set of constant values assigned to the gaps of the middle third Cantor set $C_{1/3}$, and hence, in the limit $n\rightarrow \infty$, yields the Cantor's staircase function, that equals  $s=\log_{3} 2$ dimensional Hausdorff measure restricted on the Cantor set: $v(\xi)={\cal H}^s([0,\xi]\cap C_{1/3})$.

(ii) More generally, the quadratic duality equation (\ref{dl}) and its conjugate for $\tilde v$ admit arithmetical scaling solutions of corollary (\ref{wsdn}), as well as non-scaling solutions by the quadratic equation (\ref{dl}). In fact, the above two limits, viz, $\kappa>> \mu$ and $\kappa\sim O(\mu)$ exhaust the set of nontrivial solutions, since the limit $\kappa<<\mu$ reduces to the self dual limit. A detailed analysis of weakly dual arithmetical asymptotics would be treated else where.}
\end{remark}

\begin{example}[\bf Strictly dual] {\rm 
To give an explicit construction to exemplify the generation of $\Gamma_F$, we first give a general construction of a self similar set $F$ that could be facilitated by a nontrivial duality structure  that creates a finite region even in the asymptotic limit $x\rightarrow 0$ that could support a countable set of densely defined nontrivial valuations leading to a fractal curve.

We begin by considering a dense  sequence of asymptotics $x_i$ satisfying $\delta\leq \ldots<x_2<x_1<x$ in the interval $[\delta,x)$. Such a choice exists, by nontrivial duality structure. For the sake of convenience, denote the corresponding dense sequence in logarithmically rescaled interval $[0,\xi)\subset \bf R$ by the usual ordering $0<\xi_1<\xi_2<\ldots$. Let us now choose a finite subsequence $\xi_{i_k}$ such that $\xi_{i_0}=0, \ \xi_{i_{k}}<\xi_{i_{k+1}}$ and $k=0,1,\ldots, m^i-1$ for some positive integer $m$. Define $f_i: (0,\xi) \rightarrow {\bf R}^+$ by assigning constant values {\em continuously} across the closed intervals $[\xi_{i_k},\xi_{i_{k+1}}]$, which are, however, parametrized by the parameter $\xi$. To see explicitly, how the assigned parametrized values behave, let us choose, without loss of generality, $f_0(\xi)=\xi$, and then assume that at each subsequent stages as $i$ runs over positive integers $i=1,2,\ldots$, $f_0$ is acted  upon by a nonlinear generating operation $\cal N$ such that $f_i(\xi)={\cal N}^i(f_0(\xi)), \ {\cal N}^0=Id$. In general, the nonlinear generator $\cal N$ is defined  by an iterated function systems (IFS), having $m$ distinct components. For completeness, we give two definitions of $\cal N$ via well known IFS. Assuming $f_0(\xi)$ as the initiator for a self similar fractal curve, and the generating operator $\cal N$ the corresponding generator, one can now convince easily \cite{falc, hutch, oka} that the limiting curve $f(\xi)=\underset{i\rightarrow \infty}\lim f_i(\xi)$ does converge uniformly to a continuous fractal curve of certain Hausdorff dimension $s\geq 1$. The nonlinear operator thus defines a continuous deformation in the space of real valued continuous mappings.

When the limiting curve $f$ is nondecreasing, and singular (in the sense of Lebesgue measure), we are done, the graph of the curve $(\xi,f(\xi))$ being the deformed fractal curve $\Gamma_F$. However, when $f$ is no-where differentiable self similar curve $F$ that is induced at the background of the concerned deformation process, $\Gamma_F$ is the graph of the strictly increasing function $v(\xi)$, that is given by the associated Hausdorff measure  $v(\xi)={\cal H}^s(F\cap (0,\xi))$.

 (i) A family of Koch curves can be defined by the iterated function system (IFS) of contracting similarities $S =\{S_1,S_2,S_3, S_4\}$ acting on $I=[0,1]\subset C$, the complex plane is defined by 
\begin{equation}\label{koch}
S_1(z)= Lz, \ S_2(z)=L(1+az), \ S_3(z)=L(1+a+\bar az), \ S_4(z)= L(1+2\cos\alpha+az))
\end{equation}
where $a=e^{i\alpha}$, $L=(2+2\cos\alpha)^{-1}$, and $z\in C$ \cite{koch}. The nonlinear operator $\cal N$ is here identified with 
the  Hutchinson operator \cite{falc, hutch} associated to the given IFS: ${\cal N}(A) = \bigcup^4_{i=1} S_i(A), \ A\subset \bf R$. 

(ii) As a second example, we may recall Salem's construction of a parametrized family of singular functions $f_a(x)$ defined and analyzed  in Ref.\cite{salem, oka}. The function $f_a(x)$ is no-where differentiable if $2/3\leq a<1$, nondifferentiable almost every where for $a_0< a<2/3$ and differentiable almost every where if $0<a<a_0$, where $a_0$ is the unique real root of $54a^3-27a^2=1$ \cite{oka}. In particular, $f_{1/3}(x)=x$ and $f_{1/2}(x)$ is the middle third Cantor function. The nonlinear generator $\cal N$ again is defined by the corresponding Hutchinson operator \cite{hutch}. 
}
\end{example}

\subsection{Renormalization Group}
The above duality induced nonlinear transformation in the asymptotic neighbourhood of 0, now lends itself to an interesting interpretation of renormalization group (RG). Intermediate asymptotics\cite{baren} and renormalization group (RG) (actually a semi-group)\cite{ono, golden1} have found significant applications in scaling phenomena\cite{golden2} that arise generally in an intermediate stage (mesoscopic scale) of diverging asymptotic of nonlinear phenomena or processes.

We note first of all that our definitions of renormalized asymptotics (\ref{renom}) are actually intermediate asymptotic quantities, since these are defined and meaningful in an  intermediate  region $0<< \delta<x<<1$. To see explicitly the relationship with RG and the present approach, thus justifying the nomenclature used, let the transformation  of a null asymptotic $x$ to a finite renormalized valuation $v(x)$ be denoted by the RG operator ${\cal T}: \ {\cal T}(x)=\frac{\log x/\delta}{\log \delta^{-1}},$ in the limit $x\rightarrow 0, \ \delta\rightarrow 0$, satisfying $ \ 0<\delta<x$. Clearly $v(x)=|{\cal T}(x)|$. Since, $v(x)\in \bf R$, the composition of $\cal T$ is well defined, and hence it generates a semi-group.

Next, we derive the RG flow equations \cite{ono} in the context of renormalized asymptotics $X^+$. To begin with, let us reinterpret the original asymptotic parameter $x$ as   primary scale relative to which the renormalized asymptotic $X^+$ has the representation $X^+=x\cdot x^{-v^+(X^+)}(1+o(1))$. Clearly, the rescaled quantity $\bar X^+=X^+/x$ is by definition a divergent quantity in the limit $x\rightarrow 0$. Introducing a secondary running phenomenological scale $\delta$, one could rearrange the divergent quantity $\bar X^+$ into a "phenomenologically" finite quantity $\bar X^+_{ph}$ in the form \cite{ono}
\begin{equation}
\bar X^+_{ph}=Z\bar X^+=\xi^{v^+(X^+)}, 
\end{equation}
when one introduces the rescaled variable $\xi=\delta/x<1$ and the renormalization constant $Z=\delta^{v^+(X^+)}$, to eliminate (absorb) the divergent part in the asymptotic $X^+$. Clearly, the rescaled renormalized asymptotic $\bar X^+$ should be independent of the intermediate phenomenological scale $\delta$ so that the renormalization equation  $\frac{\partial \bar X^+}{\partial\ln\delta}=0$ is satisfied,   which then translates into 
\begin{equation}\label{cse1}
\frac{\partial Z^{-1}}{\partial \ln \delta}\bar X^+_{ph}+Z^{-1}\frac{\partial \bar X^+_{ph}}{\partial \ln \delta}=0
\end{equation}
a Callan-Symanzik type RG equation \cite{ono, golden2}. Noting that $\frac{\partial \ln Z}{\partial\ln \delta}=v^+$ and using the substitution $\delta=\xi x$ for a {\em fixed} $x$ in the derivation in the second term, one obtains $\bar X^+_{ph}\propto \xi^{v^+(\xi)}$, as above. As a consequence,  the valuation here has the status of an anomalous scaling dimension of an underlying dynamical problem.

In the present formalism, one is essentially interested in the renormalized valuation $v^+(X^+(x))$, which is directly introduced through the definition (\ref{vnorm}), bypassing the RG analysis outlined above. Moreover, interpreting the renormalized valuation $v^+(x)$ as the renormalized, running coupling  constant of field theory \cite{golden2}, one, however, notes that $\frac{\partial\bar X^+_{ph}}{\partial \ln x}\neq 0$. As a consequence, one sets 
\begin{equation}\label{beta}
\frac{\partial\bar v^+(x)}{\partial \ln x}=\beta(v^+(x))
\end{equation}
where $v^+(x)\equiv v^+(\xi)$, and $\beta(v^+)$ is the RG beta function \cite{golden2} for the corresponding RG flow of the nontrivial valuation $v^+$. One justifies this $\beta$- function interpretation in the following manner.

For a fixed choice of $x$ and $\delta$, we can rewrite $\log x/\delta=\tilde \xi=\eta\log x^{-1}$ for a new rescaled parameter $\eta$. For a fixed value of $x$, the mapping $v:I\rightarrow I, \ I=[0,\xi]$, acts as  an anomalous dimension of the underlying fractal set. However, as $x\rightarrow 0$,  $v^+=v(\eta\log x^{-1})$, for a fixed $\eta$, now defines a one parameter family of continuous deformations in the space of continuous mappings  that deforms an initial self dual $v_s=1$ (say), to the final limiting fractal $v_f$, as $x$ varies from an initial self dual asymptotic of the form $x_s=e^{-k}<<1, \ k>0$ large but fixed, for instance, to a limiting $x_f=e^{-u}\rightarrow 0, $ for $u\rightarrow \infty$. One needs to recall here that $v_s(\xi)=v_s(k\eta)=v_s(\eta)$ by scaling invariance, proposition (\ref{invprop}).

As a consequence, as $x\rightarrow 0$, the RG flow of $v^+$, in the space of continuous functions, is controlled by a specific $\beta$-function that arises naturally in the context of a  nonlinear    generating operator $\cal N$ that was introduced in proposition (\ref{dual}) to generate a self similar fractal set in the interval $(0,\xi)$. 

For the sake of clarity, we next re-derive the above inferences more formally. Recall that the rescaled renormalized asymptotic, according to the definition (\ref{renom}), may be written as $\bar X^+=\bar X^+(\delta, v^+(x))$, being essentially a function of two scales $x$ and $\delta$, where $\delta$ is interpreted as the phenomenological scale in the sense of RG, $x$ appear through the valuation $v^+(x)$. In the framework of the conventional dimensional analysis, $\bar X^+$ is, however, trivially a constant. In the present framework, we, however, wish to endow this quantity a nontrivial {\em phenomenological} value, by bringing in a renormalization constant $Z$: $\bar X^+_{ph}=Z\bar X^+$, so that $\bar X^+_{ph}$ is nontrivial and finite. The independence of $\bar X^+$ on the phenomenological scale now translates into the full Callan-Symanzik RG equation \cite{golden2}
\begin{equation}\label{cse2}
\frac{\partial \bar X^+_{ph}}{\partial \ln \delta}+\beta(v^+)\frac{\partial \bar X^+_{ph}}{\partial v^+}+\gamma(\delta)\bar X^+_{ph}=0
\end{equation}
where $\frac{\partial\bar v^+}{\partial \ln \delta}=\beta(v^+)$ is the {\em beta function} and  $\gamma(\delta)=\frac{\partial \ln Z}{\partial \ln \delta}$ is an {\em anomalous dimension}. Solving this first order partial differential equation along the characteristic defined by the beta equation, one immediately recovers $\bar X^+_{ph}$ as $\bar X^+_{ph}\propto e^{-\int \gamma(\delta) d\ln \delta}$. For a fixed $x$, one now identifies $v^+$ with the anomalous dimension through the equality $v^+(\delta)\ln \delta=\int \gamma(\delta) d\ln \delta$. For a general $x$, the beta function then controls the variability of $v^+$. 

To summarize, we have presented two distinct RG interpretations of the renormalized valuation $v^+$: in one hand $v^+$ has the status of anomalous dimension in the sense that an asymptotic $x\in \bf R$ acquires in $\cal R$ an extra scaling exponent in the form $x^{1-v^+}$, so that $v^+$ does have the status of an anomalous fractal exponent. On the other hand the flow of $v^+$ in the function space also has a RG beta function interpretation.   

\section{Applications: Wave Equations}

\subsection{Renormalizability of Derivative}

Consider a continuously differentiable function $f$ in the interval [0,1]. We now show that  the derivative of $f$ at $a\in [0,1]$, $f^{\prime}(a)=\underset{\Delta x\rightarrow 0}\lim \frac{\Delta f}{\Delta x}$, where $\Delta f=f(x)-f(a)$ and $\Delta x=x-a$, is {\em invariant under renormalizability transformation $x\mapsto v(x)$}. To prove this, we first reinterprete the limit $x\rightarrow a$ as $v(\Delta x):=\Delta v(x)=v(x)\rightarrow 0$ (final equality follows by translation invariance). Let us also make use of the definition $v(f(x):=f(v(x))$, for the self similar extension of $f$ in the extended neighbourhood of 0 in $\bf {^*R}$, that is in $\bf O$ \cite{dss}. As a consequence, we have $v(\Delta f)(x)=\Delta f(v)(x)$. Notice that, here we are assuming that both the renormalized extensions of $x$ and $f$ are parametrized by the identical scale $\delta$.

Next, rescale the numerator and denominator of the differential quotient $\frac{\Delta f}{\Delta x}$ by the scale $\delta$ and use the translation invariance, as well as the first order approximation $\log x/\delta\approx x/\delta - 1$ etc, and then make yet another rescaling by logarithmic quantity $\log\delta^{-1}$, and finally take the usual limit $\Delta x\rightarrow 0$, together with the scaling limit $\delta\rightarrow 0$, successively, to get 
$$
\underset{\Delta x\rightarrow 0}\lim \frac{\Delta f}{\Delta x} \ \mapsto \ \underset{\Delta v(x)\rightarrow 0}\lim \frac{\Delta f(v)}{\Delta v(x)}
$$
Consequently, the derivative of a continuously differentiable function $f$ at any point $a$ is invariant under the renormalization transformation, so that $f^\prime(a)\mapsto \ D_vf(x)=\frac{d f(v(x))}{d v(x)}|_{x\in \bf O}$.

For a nondifferentiable, or singular function $f(x)$, with $x$ varying on a fractal curve $\gamma$, we now define the duality induced asymptotic derivative by extending naturally the above renomalizability invariance. Let $x\in \gamma$, a fractal curve. One invokes appropriate renormalizability, so as to map the asymptotic $x \ (\rightarrow 0)$ to an effective renormalized variable $v(x)$ in the form of the associated integral (devil's) staircase function \cite{parv2}, that is  monotonic increasing. Then the asymptotic derivative on the fractal curve is defined by 
\begin{equation}\label{fracderv}
D^a f(x):= \frac{d^{\gamma}f(x)}{d x^{\gamma}}:= \underset{\Delta v(x)\rightarrow 0}\lim \frac{\Delta f(v)}{\Delta v(x)}
\end{equation}
when the right hand limit exists, and where $d^{\gamma} x = dv(x)$. This definition clearly parallels the one introduced in \cite{parv1} and in subsequent applications in local fractal or fractional derivatives \cite{kol1, bel1,bel2}. As a consequence the formalism of local fractional type calculus developed in the recent literature \cite{yang1,yang2} could be easily incorporated into the present context. One should, however, realize the basic difference between the present formalism  with those in the literature. As remarked in Introduction, the present approach develops a larger framework to address the problem of continuous deformation of the ordinary integral order calculus and dynamics into  a fractional, and more generally, a fractal calculus and dynamics, in an asymptotically late time or in an asymptotically small spatial extensions. The fractional or local fractional techniques available in literature, however, at best, are realized in an ad hoc manner, and one is unable to bridge the gulf between integral and non-integral  dimensions continuously, as is being proposed here.

To exemplify the continuous deformation of the dynamics, let now consider the linear wave equations in one and two dimensions successively.

\subsection{One Dimension}

Let us consider one dimensional linear wave equation
\begin{equation}\label{1d}
\frac{\partial^2 U}{\partial t^2}=c^2  \frac{\partial^2 U}{\partial x^2}, \ t>0, \ 0< x< l
\end{equation}
along with homogeneous Dirichlet boundary conditions (BC): $U(t,0)=U(t,l)=0$ and initial conditions (IC): $U(0,x)=h(x), \ U_t(0,x)=0, \ 0\leq x\leq l$. Assuming $h(x)$ satisfy suitable Dirichlet's conditions, the equation admits the well known double Fourier series representation of the classical (smooth) solution in  the time $t$ and space variable $x$. 

In a practical application, initial deflection more often be an irregular fractal type curve rather than simply piecewise smooth.  In such a vibration of a fractal string, appropriate spatial variable would not be the ordinary real variable $x$, but, instead, an appropriately renormalized variable $v(x)$, that parametrizes the fractal string $\gamma$ naturally in the form given by the concomitant fractal integral staircase function $v(x):=S_{\gamma}^s (x)$ \cite{parv2}, where $s$ being the Hausdorff dimension of the fractal string. The associated vibration is then denoted by $U_{\gamma}(t,x):=U(t,v(x))$, and the corresponding continuously deformed wave equation on the fractal string is given by  
\begin{equation}\label{1dfrac}
\frac{\partial^2 U}{\partial T^2}=v(c)^2\frac{{\partial^{2\gamma}} U}{{\partial} x^{2\gamma}}, \ t>0, \ 0< v(x)< v(l)
\end{equation}
where $v(c)$ denotes the dimensionless wave velocity in  the fractal string dimension, and the rescaled time variable $T=ct$ has the dimension of space. The boundary and initial conditions are also transformed as $U(T,0)=U(T,v(l))=0 $ and $U(0,v(x))=h(v(x)), \ U_t(0,v(x))=0$ respectively. 

A remark is in order here. The equation (\ref{1dfrac}) is assumed to be a model of fractal string vibration when the entire  formalism of the ordinary real calculus is transported over the fractal space $\Gamma$ (in virtue of conjugacy between ordinary and fractal calculus \cite{parv2}), represented here by the fractal curve $\gamma\subset \Gamma$ of a given Hausdorff dimension $s$ and equipped with a renormalized effective variable $v(x)$. This effective variable should be treated as the basic (independent) variable in the calculus on the fractal space. Letting, at first, the time parameter $t$  unaltered, all the relevant physical (dynamical) quantities in the wave propagation problem, viz, fractal wave velocity $c_f$, fractal wave number $k_f$ etc must have the structure determined uniquely by the renormalization prescription, concomitant to the given fractal medium, so that one must have $c_f=v(c)$, $k_f=v(k)$, etc, where $c, \ k$ denote the corresponding parameters in the conventional (non-fractal) wave equation in $\bf R$. It follows that as a function of $k$, the corresponding fractal wave number $v(k)$ has the structure of the associated devil's staircase corresponding to the underlying fractal curve and so on etc. 

Incorporating the separation of variables and Fourier series theorems as formulated on a fractional space \cite{yang2}, the infinite series representation of the fractal vibration can be written down as 
\begin{equation}\label{fracsol1}
U_{\gamma}(t,x)=\sum_1^{\infty} a_n \cos \omega_f T\sin k_f v(x)
\end{equation}
where the fractal Fourier coefficient $a_n=\frac{2}{v(l)}\int_0^{v(l)} h(v(x)\sin  k_f v(x) d^{\gamma}x$. Consequently, the fractal wave dispersion relation is obtained as $\omega_f=v(c)k_f$, where the fractal wave number is determined as discrete eigenvalues $k_f:=v(k)=\frac{n\pi}{v(l)}$. It follows that the fractal wave frequency $\omega_f=v(c)v(k)$, as a function of non-fractal wave number $k$, has the form of the devil's staircase corresponding to the fractal string, and so is nowhere differentiable continuous function of $k$. The similar results are  also reported in the literature \cite{fracfreq}. 

One can now proceed one step further in modeling more precisely the fractal wave vibration. Recognizing the fractal nature of the wave frequency  $\omega_f$, it becomes imperative, because of dimensional reason,  to fractalize the time axis $T$ as well, so that the associated fractal time variable, in the wave equation (\ref{1dfrac}), should be raised to the fractal time  as $v(T)$, in conformity with the spatial fractal structure $v(x)$ and the fractal frequency $\omega_f$.  This transformation in time variable is of course admissible in the present formalism because of the inherent renormalizability invariance of the wave equation (\ref{1d}) under the asymptotic renormalization transformations $x\mapsto v(x), \ T\mapsto v(T)$, as proved in Sec 3.1. This also constitutes an example of {\em coherent and correlated spatio-temporal duality } introduced in Ref.\cite{dss}. As a consequence, more accurate model of a fractal string vibration seems to be 
\begin{equation}\label{1dfrac2}
\frac{{\partial^{2\gamma}} U}{\partial T^{2\gamma}}=v(c)^2\frac{{\partial^{2\gamma}} U}{{\partial} x^{2\gamma}}, \ t>0, \ 0< v(x)< v(l)
\end{equation}
that is consistent with the asymptotic scaling dimensions and the fractal dispersion law. The associated fractal vibration is now given by (\ref{fracsol1}), but the time variable $T$ replaced by the corresponding fractal time $v(T)$. It is of interest to notice that the time parameter in a dynamical problem in Euclidean space is assumed to be a monotonic increasing real variable, by default. In a fractal space dynamics, the corresponding time parameter ought to have a fractal behaviour, so as to make the underlying  equation and relevant dynamical quantities (fractal) dimensionally correct. Interestingly, this principle of spatio-temporal coherence in duality structure parallels quantization principle in quantum mechanics, when both space and time quantizations are implemented simultaneously to describe a microscopic quantal state.

\subsection{Two Dimension}

Consider the two dimensional wave equation in a bounded domain $D$ in plane with boundary $\Gamma=\partial D$ given by 
\begin{equation}\label{2dwave}
\frac{\partial^2 U}{\partial t^2}=c^2 ( \frac{\partial^2 U}{\partial x^2} +\frac{\partial^2 U}{\partial y^2})
\end{equation}
with initial and boundary conditions respectively as $U(0,x,y)=h(x,y), \ U_t(0,x,y)=0, \ (x,y)\in D$ and $U(t,x,y)|_{\Gamma}=0, \ t\geq 0$. Recently, Ma and Su \cite{su} presented an infinite lacunary solution of the above wave equation for dyadic (quadratic type 2) Koch type curve $\Gamma$. Their method is simple, based on the standard separation of variable method over a sequence of approximating regions $D_k$ with prefractal polygonal type boundaries $\Gamma_k=\partial D_k$ for $k=1,2,\ldots$. This approximating regions are supposed to converge to the original region $D$ with fractal boundary $\Gamma$. The sequence of approximating boundaries $\Gamma_k$ considered by the authors are a sequence of squares circumscribing the quadratic koch curve form above and are given by $\Gamma_k: -a_{2k}\leq x\leq a_{2k+1}, \ -a_{2k}\leq y\leq a_{2k+1}$, where the sequence $a_k$ is defined by the recurrence, $4a_{2k}=a_{2k-1}, \ a_{2k+1}=1+a_{2k},  \ a_{-1}=1, \ k=0,1,2,\ldots$, so that $a_k\rightarrow 1/3, \ k\rightarrow \infty$. The suggested  lacunary series solution $U(t,x,y)$ of the problem was reported as \cite{su}
\begin{align}\label{2dsol}
U(t,x,y) &=\underset{k\rightarrow \infty}\lim U_k(t,x,y), \ {\rm where} \\
U_k(t,x,y) &= 
\sum_m\sum_n A_{k,m,n}\cos 4^k\pi \sqrt{(m^2+n^2)}t \times 
\sin 4^k\pi m x \times \sin 4^k \pi n y
\end{align}
Here, the Fourier coefficients $A_{k,n,m}$ are given by 

$$A_{k,m,n}  =  4{\int\int}_{D_k} h_k(x,y)\sin 4^k\pi m x \times \sin 4^k \pi n y  dx dy$$
 and $h_k(x,y)$ is the boundary condition on the   $k$th approximating boundary $\Gamma_k$. Under suitable conditions on the boundary functions, the above solution was shown to exists uniquely as a continuous function in the domain $D$. However, the lacunary series in (\ref{2dsol}) turns out to be no-where differentiable in the domain $D$ with fractal boundary $\Gamma$. This is justified in \cite{su} as indicative of the failure of the classical wave equation in the limit $k\rightarrow \infty$, when the fractal boundary arises as the limit set. Notice that, the approximating solutions $U_k(t,x,y)$ do exist as the classical solution of the wave equation with approximating boundaries $\Gamma_k$ under appropriate conditions on the boundary data $h_k(x,y)$. However, {\em this interpretation is unsatisfactory, as the iterated boundaries $\Gamma_k$ do not converge to the intended fractal boundary $\Gamma$, but rather to the limit square $\Gamma^{\prime}: -1/3\leq x\leq 4/3,\ -1/3\leq y\leq 4/3$ circumscribing the intended fractal $\Gamma$}. 

In the present approach, the duality structure provides one with an interesting avenue, not only in removing the above deficiency, but also to offer a more realistic modeling of the two dimensional wave vibration by a fractal wave equation in appropriate fractal scaling variables.  Introduce the scale $\delta$ as $\delta=1/4^k$, invoking the duality structure in the 2D wave problem in an asymptotic neighbourhood of a point $(t_0,x_0,y_0)\in {\bf R}\times D$. Next, define the renomalization transformations,  both in dimensionless space and time variables $x,\ y$ and $T=ct$, so that $x\mapsto v(x)$, $y\mapsto v(y)$ and $T\mapsto v(T)$, where $v$ now corresponds to the devil's staircase for the given quadratic Koch curve in respective  variables, as indicated in Sec.3.1. Translation invariance and the renormalizbility invariance of the 2D wave equation now recast (\ref{2dwave}) into a fractal equation
\begin{equation}\label{2dfrac}
\frac{{\partial^{2\gamma}} U}{\partial T^{2\gamma}}=v(c)^2(\frac{{\partial^{2\gamma}} U}{{\partial} x^{2\gamma}} +\frac{{\partial^{2\gamma}} U}{{\partial} y^{2\gamma}}), \ t>0, \ (x,y)\in D, \ \partial D=\Gamma
\end{equation}
where $d^{\gamma}T=dv(T), \ d^{\gamma}x=dv(x)$ and $d^{\gamma}y=dv(y)$ are the relevant fractal differentials. To illustrate the aforesaid renormalized scale transformations more explicitly, one first shifts $x\mapsto \Delta x=x-x_0$, by translation invariance, and then exploits the limiting rescaling symmetry of the form $v(x)=v(\Delta x)\approx\frac{\log 4^k\Delta x}{k\log 4}\approx \frac{4^k\Delta x-1}{k\log 4}$ etc of each term of the wave equation (\ref{2dwave}), following Sec.3.1, and finally let $k\rightarrow \infty$, to reproduce fractal wave equation (\ref{2dfrac}). The fractor $v(c)^2$, however, is introduced by hand from dimensional analysis. In formulating the above fractal equation we have also invoked the principle of {\em coherent and correlated spatio-temporal duality structure} as explained in Sec.3.2 (final paragraph).

We remark that the  fractal equation (\ref{2dfrac}) is realized as the limiting equation from the renormalization scale transformations in the limit $k\rightarrow \infty$. For any finite $k$, the classical wave problem with non-fractal boundary $\Gamma_k$ does admit a classical smooth solution. The limit $k\rightarrow \infty$ is meaningless in the classical sense. However, under the duality enhanced renormalizability transformations, the original wave equation is transformed into the fractal wave equation (\ref{2dfrac}), that admits a continuous solution 
\begin{equation}\label{2dsmth}
U_{\gamma}(t,x,y) = 
\sum_m\sum_n A_{m,n}\cos \pi \sqrt{(m^2+n^2)}v(T) \times 
\sin \pi m v(x) \sin \pi n v(y)
\end{equation}
in the appropriate fractal continuous variables $v(T), \ v(x)$ and $v(y)$ and where $A_{m,n}  =  4{\int\int}_{D} h(x,y)\sin \pi m v( x) \times \sin  \pi n v(y) \ d^{\gamma}xd^{\gamma}y$. Moreover, this solution is well defined and smooth in the extended notion of fractal differentiability that is being developed here and in Ref.\cite{dss}. However, the graphs of the fractal wave solution would reflect the fractal characteristics that are inherited  from the fractal boundary by the asymptotic fractal variables.

\section{Closing remarks}

We have presented in this paper a novel analytic framework to address the interesting question of a possible continuous deformation of the ordinary integral order dynamics into a local fractional type dynamics. The intended continuous deformation is facilitated by  formulating the concept of duality structure that is shown to exist in the family of null or divergent asymptotics (sequences). The significance of self dual and strictly dual asymptotics are discussed in  offering a new interpretation of emergent smooth or fractal like nonsmooth (chaotic) structure in a nonlinear dynamical problem. A renormalization group (RG) interpretation of the emergent nonlinear structures is also presented. Two applications of the formalism are considered in the context of one and two dimensional linear wave equations. The RG induced continuous deformation transform the wave equations into fractal (local fractional) wave equations modeling respectively fractal string vibration or standing wave vibration with fractal boundary. Applications to a more realistic nonlinear problem will be considered in future.

{\bf Acknowledgement:}
The first author (DPD) thanks IUCAA, Pune for awarding a Visiting Associateship.

\end{document}